\documentclass[11pt]{article}

\setlength{\textheight}{23cm}\setlength{\textwidth}{16cm}
\setlength{\topmargin}{-1cm} \setlength{\oddsidemargin}{-0.2cm}
\setlength{\evensidemargin}{0.0cm}

\usepackage{epsfig,amsmath,amsfonts,amssymb,latexsym,color,amsthm,hhline,multirow,subfigure,graphicx}
\usepackage{amsmath}
\usepackage{amsthm}
\usepackage{enumerate}
\usepackage{amsfonts}
\usepackage{verbatim}
\usepackage{graphicx}
\usepackage{float}
\usepackage{epstopdf}
\usepackage{amssymb}
\usepackage{bm}
\usepackage{bbm}
\usepackage{hyperref}
\usepackage{color}
\usepackage{curves}
\usepackage{tikz}
\usepackage[hang]{caption}
\usepackage{mathrsfs}

\newtheorem{theorem}{Theorem}[section]
\newtheorem{corollary}{Corollary}[section]
\newtheorem{lemma}{Lemma}[section]
\newtheorem{proposition}{Proposition}[section]
\newtheorem{definition}{Definition}


\newcommand{\Z}{{\mathbb Z}}
\newcommand {\PP}{{\mathbb P}}

\newcommand{\N}{\mathbb{N}}



\newcommand\E{\mathbb{E}\,}
\newcommand\Prob{\mathbb{P}}
\renewcommand\epsilon{\varepsilon}
\renewcommand\phi{\varphi}

\begin{document}

\title{The Schelling model on $\Z$}
\author{Maria Deijfen\thanks{Department of Mathematics, Stockholm University; {\tt mia@math.su.se}} \and  Timo Hirscher \thanks{Department of Mathematics, Stockholm University; {\tt timo@math.su.se}\newline\indent\,
This work was in part supported by grant 2014-4948 from the Swedish Research Council.
\newline\indent\, MD is grateful to Jessica Dj\"{a}kneg\aa rd for introducing her to the Schelling model during the work on \cite{Jessica}.}}
\date{June 2019}

\maketitle

\begin{abstract}
\noindent A version of the Schelling model on $\Z$ is defined, where two types of agents are allocated on the sites. An agent prefers to be surrounded by other agents of its own type, and may choose to move if this is not the case. It then sends a request to an agent of opposite type chosen according to some given moving distribution and, if the move is beneficial for both agents, they swap location. We show that certain choices in the dynamics are crucial for the properties of the model. In particular, the model exhibits different asymptotic behavior depending on whether the moving distribution has bounded or unbounded support. Furthermore, the behavior changes if the agents are lazy in the sense that they only swap location if this strictly improves their situation. Generalizations to a version that includes multiple types are discussed. The work provides a rigorous analysis of so called Kawasaki dynamics on an infinite structure with local interactions.
\end{abstract}

\section{Introduction}

The Schelling model of segregation was formulated by Thomas Schelling in the 1960's as an attempt to explain the occurrence of racial segregation in terms of individual preferences rather than policies of central authorities; see \cite{SchI,SchII}. It is one of the first examples of so-called agent based modeling in the economic literature, where the dynamics is formulated in terms of actions of autonomous agents and the interest then concerns large scale properties of such populations. In probability, this type of models is studied in the area of interacting particle systems. In the work of Schelling, two types are distributed on a finite part of $\mathbb{Z}^d$ ($d=1,2$) that also contains a certain percentage of empty sites and, depending on the number of agents of opposite type in some neighborhood around it, an agent can choose to move to another (empty) location where the neighborhood contains fewer agents of the opposite type. Specifically, if the fraction of agents of opposite type in the neighborhood exceeds a certain threshold, then the agent is dissatisfied and therefore prone to move. One of the main lessons learnt from the model is that already a mild preference for being surrounded by alike agents can lead to massive segregation on a macroscopic level.

The Schelling model has been widely studied in the economic literature, mainly by aid of simulations, but so far there is not much rigorous work. A number of choices have to be made to specify the precise dynamics of the model, e.g.\ the proportions of the types and empty sites, the choice of the new location of a moving agent and whether moves that do not strictly improve the neighborhood composition for an agent are permitted. We study a version of the model on the line $\Z$, and show that the model exhibits qualitatively different asymptotic behavior depending on some of these choices. In contrast to most existing rigorous work, we work with the original dynamics where the proportions of the types are fixed, that is, the agents keep their type but change their location (so-called Kawasaki dynamics). This dynamics is novel compared to related models in statistical mechanics where particles instead switch type depending on their neighborhood (so-called Glauber dynamics); see e.g.\ \cite{scaling} for recent work on a Schelling type model. Our results concern a version of the model that does not include empty sites, but that is instead driven by pairwise swaps. In other work on the Schelling model with swaps, the distance that a dissatisfied agent is willing to move typically grows to infinity with the size of the system; see e.g. \cite{Young, BarmI,Brandt}. In our model, despite the system being infinite, swaps occur based on a given moving distribution which is a.s.\ finite, so that the rules for interactions are in this sense local.

Our model works roughly as follows. Each site of $\Z$ is initially occupied by either a type 1 or a type 2 agent. An agent is satisfied if both its nearest neighbors are of the same type (that is, if the proportion of agents of the same type among its nearest neighbors is strictly larger than 1/2). An agent that is not satisfied makes attempts to move at rate 1. It then sends a request to an agent chosen according to some moving distribution. If the involved agents are not worse off after the move -- that is, if they do not have strictly fewer nearest neighbors of the same type at the new locations -- then they switch places. We show that the asymptotic behavior is qualitatively different depending on whether the moving distribution has bounded or unbounded support. When the support is unbounded, then the configuration is homogenized in the sense that it will locally consist of only one type, but the type changes infinitely often. This behavior resembles that of the voter model. When the support is bounded, however, the configuration on any finite section of $\Z$ freezes after finite time and simulations indicate that it then consists of intervals only slightly exceeding the maximal moving horizon. Furthermore, if the agents are lazy in the sense that they are not willing to move if the move does not strictly improve their neighborhood composition, then the behavior of the model is different and it freezes regardless of the moving distribution.

\subsection{Definition of the model}

As indicated above, our main results concern a model on $\Z$ with two types. However, we formulate the model slightly more generally on $\Z^d$ including $c\geq 2$ types. To this end, let $\bm{\eta}(0)=\big\{\eta_v(0)\big\}_{v\in\Z^d}$ be a collection of i.i.d.\ random variables that constitute the initial configuration. Their support is $\{1,\dots,c\}$ for some $c\in\N$, where $\eta_v(0)=i$ corresponds to the site $v$ being occupied by an agent of type $i$ at time $t=0$. Furthermore, let $\mu$ denote a distribution on $\Z^d$ with the following property: There exists an $h\in\N_0\cup\{\infty\}$, which we will call the {\em moving horizon}, such that
\begin{equation}\label{horizon}
\mu(u)>0 \quad\text{if and only if}\quad \lVert u \rVert_1\leq h.
\end{equation}
The distribution $\mu$ will control where an agent that is not satisfied at its current position (see below) attempts to move and \eqref{horizon} stipulates that it puts strictly positive mass on all sites within $L^1$-distance $h$ from the agent itself, where $h$ is the largest possible distance for a move.

For all $v\in\Z$, let $N_v$ be a homogeneous Poisson counting process with rate $1$ and $\boldsymbol{Q}_v=\big(Q_{v,n}\big)_{n\in\N}$ an i.i.d.\ sequence of random variables with marginal distribution $\mu$. These are taken such that we have independence both between $\eta_v(0),\ N_v$ and $\boldsymbol{Q}_v$  as well as between any two of these random entities associated with different sites. Write $N(v)$ for the set of nearest neighbor sites of $v$ and introduce the satisfaction function
\begin{equation}\label{satisfunction}
\mathrm{sat}_v(t)=\big|\big\{u\in N(v);\;\eta_u(t)=\eta_v(t)\big\}\big|-\big|\big\{u\in N(v);\;\eta_u(t)\neq\eta_v(t)\big\}\big|.
\end{equation}
An agent is dissatisfied at time $t$ if $\mathrm{sat}_v(t)\leq 0$, that is, if it has at least as many neighbors of different type as of its own type.

Our variant of the Schelling model now evolves according to the following rules. When the Poisson process $N_v$ has its $n$th jump and $v$ is currently occupied by a dissatisfied agent, the latter turns {\em active} and proceeds as follows. First, site $v+Q_{v,n}$ is evaluated in terms of the satisfaction of the agent (currently placed at $v$) after moving to this site. Then, if this is at least as high as in its current position $v$ and the agent placed there is dissatisfied as well and her satisfaction does not decrease by switching with the agent at $v$, then the two agents switch places. Note that, with $c=2$, the agents always agree on whether to swap or not in the sense that, if the satisfaction of the agent currently placed at $v$ is at least as high at $v+Q_{v,n}$, then the satisfaction of the agent currently placed at $v+Q_{v,n}$ is also at least as high at $v$. If $v$ chooses a site that is occupied by an agent of its own type, then there is effectively no change in the configuration and we can consider the swap to be denied. Observe that the memorylessness of the Poisson processes ensures that the model as defined above is a Markov process.

\subsection{Results}\label{ss:res}

Our main results concern the above Schelling model in $d=1$ with two types. When $h=\infty$, so that moves over arbitrarily long distances are possible, then the long-term behavior of the model is reminiscent of the asymptotics of the well-known voter model.

\begin{theorem}\label{d=1,unbounded}
	Consider the Schelling model on $\Z$ with two types and $h=\infty$.
	\begin{enumerate}[(a)]
	\item For any two sites $u,v\in\Z$, we have that
	\begin{equation}\label{limit}
	\lim_{t\to\infty}\Prob\big(\eta_u(t)\neq\eta_v(t)\big)=0.
	\end{equation}
	This is equivalent to $\big|\eta_u(t)-\eta_v(t)\big|\stackrel{L^1}{\longrightarrow}0$, as $t\to\infty$.
	\item Almost surely both types of agents occupy a given site $v\in\Z$ at arbitrarily large times, that is,
	\begin{equation}
	\lim_{t\to\infty}\eta_v(t) \text{ a.s.\ does not exist.}
	\end{equation}
	This entails that almost surely $\big|\eta_u(t)-\eta_v(t)\big|$ does not converge.
	\end{enumerate}
\end{theorem}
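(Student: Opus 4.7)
For (a), my plan is to reduce the statement to the decay of the interface density
$\rho(t):=\Prob(\eta_0(t)\neq\eta_1(t))$
(equal on every edge by translation invariance of the i.i.d.\ initial law and the dynamics); the union bound $\Prob(\eta_u(t)\neq\eta_v(t))\leq|u-v|\,\rho(t)$ (the number of interfaces strictly between $u$ and $v$ must be odd) then reduces (a) to $\rho(t)\to 0$. The engine is a monotonicity coming from the swap rule: bookkeeping the interfaces incident to a swapping pair shows that when a dissatisfied type-$1$ agent at $v$ swaps with a dissatisfied type-$2$ agent at a non-adjacent site $u$, the total number of interfaces changes by $2(k_v-k_u)\leq 0$, where $k_w$ counts the type-$1$ neighbors of $w$ (strict inequality holds iff $k_v<k_u$), and a short case check handles the adjacent case (change in $\{-2,0\}$). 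Counting interfaces inside $[-N,N]$ and comparing internal versus cross-boundary contributions (controlled using that $\mu$ totals mass $1$) yields that $\rho(t)$ is non-increasing, so $\rho(t)\searrow\rho_\infty\geq 0$.

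The step of excluding $\rho_\infty>0$ is the main obstacle. The heuristic is that if $\rho$ stays bounded below, strict-decrease swaps happen at a positive rate, contradicting the finite total decrease $\rho(0)-\rho_\infty$. Because $h=\infty$, $\mu$ puts positive mass on every site, so any type-$1$ singleton ($k_v=0$) that initiates a move and reaches a dissatisfied opposite-type target automatically produces a strict decrease (any such target has $k_u\geq 1>0$). The delicate point is that configurations can in principle be singleton-free, as in the periodic pattern $\ldots11\,22\,11\,22\ldots$ where all swaps are weak; one then has to argue that such weak swaps rapidly generate singletons (a one-step check on the periodic pattern confirms this) and hence maintain a positive rate of strict-decrease swaps whenever $\rho(t)\geq\rho_\infty>0$. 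Turning this heuristic into a uniform-in-$t$ lower bound on the strict-decrease rate is the main technical step.

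For (b) I would use a clean ergodicity argument assuming (a). Let $F_v:=\{\eta_v(t)\to 1\}$ and $G_v:=\{\eta_v(t)\to 2\}$, and write $q:=\Prob(F_0)$. Translation invariance gives $\Prob(F_v)=q$ for every $v$, and the symmetry swapping the two labels gives $\Prob(G_0)=q$. Part (a) together with Fatou's lemma yields
\begin{equation*}
\Prob(F_0\cap G_v)\leq\liminf_{t\to\infty}\Prob\big(\eta_0(t)=1,\eta_v(t)=2\big)\leq\lim_{t\to\infty}\Prob\big(\eta_0(t)\neq\eta_v(t)\big)=0
\end{equation*}
for every $v\in\Z$, so countable additivity forces $\sum_{v\in\Z}\mathbbm{1}_{G_v}=0$ almost surely on $F_0$. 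The underlying randomness (i.i.d.\ initial types, i.i.d.\ Poisson clocks, and i.i.d.\ move sequences across sites) is a Bernoulli system under the spatial shift and hence ergodic, so Birkhoff's theorem applied to $\mathbbm{1}_{G_0}$ gives
\begin{equation*}
\lim_{N\to\infty}\frac{1}{2N+1}\sum_{v=-N}^{N}\mathbbm{1}_{G_v}=q\quad\text{almost surely.}
\end{equation*}
The left side vanishes on $F_0$, an event of probability $q$, so $q=0$. Type-flip symmetry then forces $\Prob(G_0)=0$ as well, and since $\eta_0(t)\in\{1,2\}$, failure of the limit is equivalent to both types occupying $0$ at arbitrarily large times.
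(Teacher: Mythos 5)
Your reduction of part (a) to the decay of the separator density and your per-swap bookkeeping showing that accepted swaps never increase the number of separators both match the paper's setup (its Lemma \ref{decrease}, proved there via ergodic averaging over boxes whose boundary agents stay put, combined with the mass-transport bound on long-range incursions). However, the step you flag as ``the main technical step'' -- upgrading the heuristic that strict-decrease swaps occur at a positive rate whenever $\rho(t)\geq\delta>0$ into a uniform-in-$t$ lower bound on the decrease of $\rho$ -- is precisely the content of the paper's Proposition \ref{shortvanish}, and it is the bulk of the proof. The paper does not argue via singleton creation; instead it shows that a positive density of \emph{short monochromatic sections} forces, inside a deterministic box $\Lambda_N$ and within a time window $\epsilon=\frac{1}{2N}$, a prescribed finite sequence of swaps that merges two sections and destroys two separators, with probability bounded below uniformly in $t$ (by conditioning on the finitely many configurations of $\Lambda_N$ and using independence of the driving randomness inside and outside the box), and then converts this per-box probability into a density decrement via the ergodic theorem along the partition $\{\Lambda_N+jN\}$. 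Your sketch contains none of this machinery, and the difficulty is real: a ``positive rate of strict-decrease swaps'' at a single site does not by itself contradict the finiteness of $\rho(0)-\rho_\infty$ unless one shows the expected number of separator annihilations per unit length and per unit time is bounded below uniformly in $t$, which is exactly what the paper's local-modification construction delivers. As written, part (a) is incomplete.

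Part (b) contains a genuinely different and elegant idea -- using Fatou plus part (a) to show $\Prob(F_0\cap G_v)=0$ for all $v$ and then Birkhoff to force the density of $G$-sites to vanish on $F_0$ -- but it has a gap: the step ``the symmetry swapping the two labels gives $\Prob(G_0)=q$'' is false unless $p_1=p_2=\tfrac12$. The dynamics is label-symmetric but the initial law is not; relabeling maps the model with parameter $p_1$ to the model with parameter $p_2$, so it gives $\Prob_{p_1}(G_0)=\Prob_{p_2}(F_0)$, not $\Prob_{p_1}(G_0)=\Prob_{p_1}(F_0)$. Without that identity your argument only shows that at most one of $\Prob(F_0)$, $\Prob(G_0)$ is positive; it cannot exclude, say, $\Prob(F_0)=1$ (every site eventually blue). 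The missing ingredient is conservation of the one-site marginal, $\Prob(\eta_v(t)=i)=p_i$ for all $t$, which the paper establishes as Lemma \ref{probconst} via a mass-transport argument and then uses as follows: if eventually-blue sites had positive density, then taking $V$ to be the first eventually-blue site to the right of the origin and bounding $\Prob(\eta_0(t)=2)$ by $\Prob(V>n)+\Prob(\eta_V(t)=2)+\sum_{v=0}^{n-1}\Prob(\eta_v(t)\neq\eta_{v+1}(t))$ forces $\Prob(\eta_0(t)=2)<p_2$ for large $t$, a contradiction. In the symmetric case $p_1=p_2$ your Birkhoff argument is a clean alternative that avoids Lemma \ref{probconst} entirely, but for the theorem as stated (general $p_1\in(0,1)$) you need to either prove the marginal conservation or otherwise rule out global fixation to a single color.
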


When $h\in\N$, the asymptotics is qualitatively different. Specifically, the configuration almost surely fixates in the sense that $\eta_v(t)$ converges as $t\to\infty$. This is essentially due to the fact that the positions of monochromatic sections of length strictly larger than $h$ are stable in the sense that there cannot be any swaps of agents placed on different ends/sides of the section.

\begin{theorem}\label{d=1,bounded}
	In the Schelling model on $\Z$ with two types and $h\in\N$, the configuration almost surely converges, that is, for all $v\in \Z$
		\[\lim_{t\to\infty}\eta_v(t) \text{ a.s.\ exists.}\]
\end{theorem}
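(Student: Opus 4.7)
The plan is to combine two ingredients: (i) the existence of sufficiently long monochromatic blocks in the initial configuration, which act as barriers for individual swaps, and (ii) the interface count as a Lyapunov function. A finite combinatorial argument then closes the proof.

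Fix $v\in\Z$. By the i.i.d.\ nature of $\eta(0)$ and a Borel--Cantelli argument applied to the events ``$\eta(0)$ is monochromatic on $[n,n+h]$'', almost surely arbitrarily long monochromatic runs appear on both sides of $v$, so one can choose blocks $B_L,B_R$ of length $\geq h+1$ in $\eta(0)$ bracketing a finite window $W\ni v$. Since every swap exchanges agents at $L^1$-distance $\leq h$, the remark preceding the theorem gives that no swap can have its endpoints on opposite sides of $B_L$ or $B_R$, weakly isolating $W$. Next define the interface count $I(t):=\#\{u\in W:\eta_u(t)\neq\eta_{u+1}(t)\}$. A short case analysis of the (at most four) edges affected by a swap between sites $v_1,v_2$, combined with the dissatisfaction and ``not worse off'' conditions that make the swap admissible, shows that $\Delta I\leq 0$ for any admissible swap, with equality iff both swap partners have (and retain) satisfaction exactly zero. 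Since $I$ is a bounded non-negative integer, it is almost surely eventually constant, equal to some $I^\star$ after a random finite time $T_0<\infty$.

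The key observation is that once every maximal monochromatic block in $W$ has length $\geq h+1$, no swap is admissible at all. Indeed, interior agents of such long blocks have satisfaction $2$ and are therefore neither legitimate initiators nor legitimate targets; boundary agents of adjacent blocks, though within reach, make each other strictly worse off by any swap; and boundary agents of non-adjacent blocks are separated by at least one intervening block of length $\geq h+1$, hence at distance $>h$. These ``fully long'' configurations therefore form an absorbing set for the dynamics restricted to $W$. After $T_0$ only $\Delta I=0$ swaps remain---they correspond to local slidings of short block boundaries---and an induction on the number of short blocks (length $\leq h$) present in $W$ shows that such slidings must, in finite time, either bring two short blocks of the same type into adjacency, admitting a $\Delta I<0$ merger and contradicting the constancy of $I$ past $T_0$, or lead directly to a configuration without any admissible swap. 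Since $W$ is finite, the dynamics is absorbed into a fully long configuration after further finite time, and then $\eta_v(t)$ is constant.

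The main obstacle is this final combinatorial step: ruling out that the $\Delta I=0$ sliding dynamics recurs indefinitely without ever triggering a merger. The finiteness of $h$ is essential here, since it confines the sliding interactions to local rearrangements of bounded range and forces the Markov chain on the finite set of configurations of $W$ compatible with $I^\star$ (and with the persistence of $B_L,B_R$) to be absorbing rather than merely recurrent. The analogous mechanism breaks down for $h=\infty$, in line with the voter-like non-fixation of Theorem~\ref{d=1,unbounded}.
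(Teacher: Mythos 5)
There is a genuine gap, and it sits at the very first step: the blocks $B_L,B_R$ of length $\geq h+1$ that you select in $\eta(0)$ are not permanent barriers. The remark you invoke (``no swaps across a section of length $>h$'') applies to a section that has length $>h$ \emph{at the time of the swap}, and long sections get eroded. If a monochromatic section $J=\{w,\dots,x-1\}$ of length $\leq h$ sits immediately to the left of your blue barrier $B_R=\{x,\dots\}$, then the swap of the agents at $w$ and $x$ is admissible (both are dissatisfied with satisfaction $0$ before and after), and each such ``snail'' swap strips one site off $B_R$; after finitely many of them $B_R$ has length $\leq h$, becomes mobile, and can be traversed entirely. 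Consequently $W$ is not isolated for all time, separators can enter it from outside, and the claimed monotonicity of your windowed interface count $I(t)$ fails. This erosion is precisely the difficulty the paper's proof is built to handle, via a mass-transport argument on the timelines of monochromatic sections (Lemma \ref{allessential}) showing that each site switches between the two ``dark'' shades only finitely often.

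The second, more fundamental, problem is that your final combinatorial step is false even for a genuinely isolated finite system, and indeed no argument using only the existence of long initial runs can prove the theorem. Consider the configuration that is all blue except for a single red agent at the origin (or a single red block of length $\leq h$). Every site has arbitrarily long monochromatic runs on both sides in $\eta(0)$, so your hypotheses are met; yet every nearest-neighbor swap of the red singleton with an adjacent blue agent is admissible forever (the satisfactions $-2$ and $0$ are exactly preserved, and neither agent is made strictly worse off), so the singleton performs a never-ending random walk and no site it visits infinitely often converges. In particular the $\Delta I=0$ ``sliding'' chain is recurrent, not absorbing, contrary to your assertion. The theorem is therefore not a configuration-wise statement reducible to local combinatorics: the paper excludes the ``perpetually sliding short section'' scenario only by a global probabilistic argument --- if it happened at $v$ with positive probability it would, by propagation to neighbors and ergodicity (Lemma \ref{ergodic}), happen at every site with probability one, which would force $\Prob(\eta_v(t)=1)>p_1$ for large $t$ and thus violate the conservation of the color densities (Lemma \ref{probconst}), using Proposition \ref{shortvanish}. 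Your proposal contains no ingredient playing this role.
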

Write $p_i=\PP(\eta_v(0)=i)$ for $i=1,2$. It is straightforward to verify that $\Prob\big(\lim\limits_{t\to\infty}\eta_v(t)=1\big)=p_1$ and $\Prob\big(\lim\limits_{t\to\infty}\eta_v(t)=2\big)=p_2$; see Lemma \ref{probconst}.

Now consider a modification of the model where agents only move if they strictly improve their satisfaction function. We refer to this as the agents being lazy. It turns out that this changes things drastically in that the configuration then always converges, regardless of whether $h$ is finite or infinite. The essential difference to the original dynamics is that, with lazy agents, every swap must involve at least one singleton agent -- that is, an agent currently having no neighbor of her own color -- that does not remain singleton after the swap. Certainly, singletons can and will be created by the dynamics, but this turns out not to play a crucial role in the long run.

\begin{proposition}\label{prop:lazy}
	Consider the two-type Schelling model on $\Z$, with lazy agents. Then, for all $v\in \Z$:
	\[\lim_{t\to\infty}\eta_v(t) \text{ a.s.\ exists.}\]
\end{proposition}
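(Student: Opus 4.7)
The plan is to prove fixation at each site $v\in\Z$ by showing that only finitely many swaps involve $v$ almost surely, which directly gives the existence of $\lim_{t\to\infty}\eta_v(t)$. The key observation is a Lyapunov monotonicity for the interface count. Let $B_i(t)=\mathbf{1}\{\eta_i(t)\neq\eta_{i+1}(t)\}$; then $\mathrm{sat}_v(t)=2-2\bigl(B_{v-1}(t)+B_v(t)\bigr)$. Since $\mathrm{sat}$ takes integer values in $\{-2,0,2\}$, the lazy rule that both swapping agents must strictly improve translates into the following: for each of the two agents at the swapping sites $x,y$, the integer $B_{w-1}+B_w$ evaluated in the post-swap configuration at its new position is strictly smaller than in the pre-swap configuration at its old position. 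Summing these two strict integer inequalities and using that only $B_{x-1},B_x,B_{y-1},B_y$ are altered by the swap, one obtains the crucial \emph{monotonicity}: each accepted lazy swap strictly decreases the formal sum $\sum_i B_i$ by at least~$2$. The adjacent case $y=x\pm1$ needs a brief separate check, since the two agents share the edge variable $B_x$; direct inspection shows that the only admissible adjacent swap realizes the pattern $2,1,2,1\mapsto 2,2,1,1$, and again the interface count drops by~$2$.

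For the bounded-horizon case $h\in\N$, monotonicity combines with locality to give fixation. Every swap touching $v$ has its other agent in $[v-h,v+h]$, so only $B_i$ with $i\in[v-h-1,v+h]$ are affected. For a window $W=[v-M,v+M]$ with $M\ge 3h$, set $E_W(t)=\sum_{i=v-M}^{v+M-1}B_i(t)$; the key monotonicity implies that each swap touching $v$ decreases $E_W$ by at least~$2$. Swaps not touching $v$ but still influencing $E_W$ can occur only through agents in the boundary layer of $W$ and could a~priori perturb $E_W$ upward; these, however, can be absorbed by running the same monotonicity on a nested sequence of windows $W\subset W^{(1)}\subset W^{(2)}\subset\cdots$, since each boundary-layer swap for a given window is an inside swap for the next. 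Because the initial interface counts $E_{W^{(k)}}(0)$ are a.s.\ finite, this chain bounds the total number of swaps touching $v$ by an a.s.\ finite random variable and yields fixation.

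For the unbounded-horizon case $h=\infty$, the nested-window argument fails because a single swap can link arbitrarily distant agents. The plan here is to exploit the singleton structure hinted at in the paragraph preceding the proposition: a case analysis shows that the number of singletons never strictly increases across a lazy swap, while the interface count still drops by at least~$2$. Combined with the i.i.d.\ initial configuration, this should enable a mass-transport or Borel--Cantelli style argument bounding the set of (possibly distant) agents that can ever participate in a swap at $v$. The main obstacle is precisely this long-range accounting: without a finite-speed-of-propagation property, one must leverage both the singleton non-increase and the $-2$ decrement of the interface count to rule out the scenario in which infinitely many distant agents successively trigger swaps at~$v$.
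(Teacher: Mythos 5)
Your local monotonicity claim is correct and is essentially the same observation the paper starts from: writing $\mathrm{sat}_v=2-2(B_{v-1}+B_v)$ and summing the two strict integer improvement inequalities shows that every accepted lazy swap decreases the interface count on the four (or three, in the adjacent case $2121\mapsto 2211$) affected edges by at least $2$; equivalently, every lazy swap involves a singleton that ceases to be a singleton. The problem is that you never convert this local decrement into an a.s.\ finite bound on the number of swaps touching a fixed site $v$, and both of your proposed routes for doing so have gaps. For $h<\infty$, the nested-window scheme does not close: the bound for window $W^{(k)}$ is of the form $\#\{\text{interior swaps}\}\le \tfrac12 E_{W^{(k)}}(0)+C\cdot\#\{\text{boundary swaps of }W^{(k)}\}$, and the boundary swaps of $W^{(k)}$ are controlled only by passing to $W^{(k+1)}$. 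Since $E_{W^{(k)}}(0)$ grows linearly in $k$ (the total initial interface count on $\Z$ is infinite), this recursion never terminates in a finite bound; some form of spatial averaging is unavoidable. For $h=\infty$ you explicitly leave the argument open, so the proposal does not prove the proposition in the generality stated (the statement covers all $h\in\N\cup\{\infty\}$).

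The paper closes both cases in one stroke with the mass-transport principle (Lemma \ref{MTP}) applied to the \emph{timelines} of monochromatic sections (Definition \ref{timeline}): since every lazy swap involves a singleton that joins a section of its own colour, every swap terminates the timeline of at least one monochromatic section present at time $0$, and each such timeline terminates at most once. Letting the rightmost site of each initial section send mass $1$ to each of the two sites involved in the swap that ends its timeline gives $\E[m(u,\Z)]\le 2$, hence $\E[m(\Z,v)]\le 2$, so the expected number of swaps at any fixed $v$ is at most $2$ and in particular a.s.\ finite, for every $h$. This is exactly the globalization device your proposal is missing: it replaces your window bookkeeping by an exact accounting in which each swap is charged to an object (an initial section's timeline) that can be charged only once and whose spatial density is finite. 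I would suggest you either adopt this mass-transport step, or replace it with an equivalent ergodic-average argument showing that the expected number of lazy swaps per site is bounded by $p(0)$; the purely deterministic window argument cannot work on the infinite line.
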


Another variation of the model is to lower the satisfaction threshold so that an agent is dissatisfied at time $t$ if $\mathrm{sat}_v(t)\leq \tau<0$. This means that only agents with no neighbor of their own color will be dissatisfied, and will also eliminate the non-fixation in the unbounded case (irrespective of whether the agents are lazy or not).

\begin{proposition}\label{prop:tau<0}
	Consider the Schelling model on $\Z$, with two types and dissatisfaction threshold $\tau<0$. Then, for all $v\in \Z$:
	\[\lim_{t\to\infty}\eta_v(t) \text{ a.s.\ exists.}\]
\end{proposition}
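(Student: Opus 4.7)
The plan hinges on the observation that, in $d=1$ with two types, the satisfaction function takes values in $\{-2,0,2\}$, so the condition $\mathrm{sat}_v(t)\leq\tau<0$ is equivalent to $\mathrm{sat}_v(t)=-2$, i.e.\ to $v$ being a \emph{singleton}: $\eta_{v-1}(t)\neq\eta_v(t)\neq\eta_{v+1}(t)$. Consequently every non-trivial swap in the dynamics involves two singletons of opposite types.

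Call an edge $\{v,v+1\}$ an \emph{interface} if $\eta_v\neq\eta_{v+1}$. I will first prove that the dynamics never creates an interface. To see this, one considers a swap between singletons $v$ and $w$ and examines how the at most four incident edges $\{v-1,v\},\{v,v+1\},\{w-1,w\},\{w,w+1\}$ are affected. If $v$ and $w$ are non-adjacent, then since $v$ was a singleton, the flip of $\eta_v$ aligns it with both of its (unchanged) neighbors and both edges $\{v-1,v\}$, $\{v,v+1\}$ turn from interface to non-interface; the same happens at $w$. If instead $w=v+1$, then the outer edges $\{v-1,v\}$ and $\{v+1,v+2\}$ are destroyed in the same way, while the central edge $\{v,v+1\}$ remains an interface because both of its endpoints have flipped. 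In neither case is a new interface produced anywhere.

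The key consequence is that each site participates in at most one swap over the whole history of the process. Indeed, after $v$ has swapped, at least one of its two incident edges has become a non-interface---both of them if the partner was non-adjacent, and $\{v-1,v\}$ (resp.\ $\{v,v+1\}$) if the partner was $v+1$ (resp.\ $v-1$). Since interfaces can only be destroyed going forward, this edge persists as a non-interface, and therefore $v$ can never again have both neighbors of opposite type, i.e.\ $v$ can never be a singleton again and hence can never again participate in a swap.

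Since $\eta_v(t)$ changes precisely when $v$ is involved in a swap, the above shows that $\eta_v(t)$ flips at most once over $[0,\infty)$, so $\lim_{t\to\infty}\eta_v(t)$ exists almost surely. The only genuinely delicate point in the argument is the adjacent-swap case, in which one edge at $v$ survives as an interface; one simply notes that the other incident edge is nonetheless destroyed, which is enough to freeze $v$ forever after.
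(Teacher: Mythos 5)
Your argument is correct, and it takes a genuinely different route from the paper. The paper disposes of Proposition~\ref{prop:tau<0} in one sentence by appeal to the proof of Proposition~\ref{prop:lazy}: there, a mass transport is defined on the timelines of monochromatic sections (the rightmost initial site of a section sends mass $1$ to each of the two sites involved in the swap that ends its timeline), and since with $\tau<0$ every swap involves singletons, Lemma~\ref{MTP} bounds the \emph{expected} number of swaps at a fixed site by $2$, whence the number of colour changes at $v$ is a.s.\ finite. You instead argue deterministically: you observe that with two types and $\tau<0$ dissatisfaction forces $\mathrm{sat}_v=-2$, so every effective swap is between two opposite-type singletons; your case analysis (non-adjacent versus adjacent partners) correctly shows that no separator is ever created and that each swapping site loses at least one incident separator permanently, so each site can be a singleton -- and hence swap -- at most once. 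This yields the stronger, pathwise conclusion that $\eta_v(\cdot)$ flips at most once, with no appeal to mass transport, ergodicity, or the timeline machinery; what it buys is elementarity and a sharp quantitative bound, while the paper's version buys uniformity with the lazy-agent case (where the same transport handles swaps involving only \emph{one} singleton, a situation your "both participants are singletons" dichotomy would not cover). One cosmetic remark: the non-adjacent case at distance $2$ cannot actually occur for opposite-type singletons (the intermediate site would force the second agent to have a like neighbour), but since your argument does not rely on excluding it, this is harmless.
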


The rest of the paper is organized as follows. Extensions and possible generalizations of the model and results are discussed in Section \ref{ss:open}. In particular, we describe to what extent our results extend to the case with more than two types of agents. Section \ref{ss:ref} contains some further references to work on Schelling type models. After a preliminary section, Theorems \ref{d=1,unbounded} and \ref{d=1,bounded} are then proved in Sections \ref{sec:unbounded} and \ref{sec:bounded}, respectively, while the short proof of Propositions \ref{prop:lazy} and \ref{prop:tau<0} is given in Section \ref{lazy}. We then establish some extensions to multiple types in Section \ref{sec:ex}.

\subsection{Extensions and further work}\label{ss:open}

Here we describe some possibilities for further work on our version of the Schelling model.\smallskip

\noindent\textbf{Multiple types.} Our model allows for an arbitrary number $c$ of agent types. For a given agent, alike types are counted with +1 in the satisfaction function, while all other types are counted with -1. Theorem \ref{d=1,unbounded} extends to the case with $c\geq 3$ types of agents; see Section \ref{sec:ex}. When the moving horizon is bounded, there are configurations that do not stabilize in a finite system; see Section \ref{sec:ex}.  We do not know if these will persist in the long run on $\Z$, and leave this for future work.\smallskip

\noindent\textbf{Empty sites.} The original version of the Schelling model includes vacant sites and moves occur only in that an agent moves to a preferable vacant site. Our model could also be modified to include empty sites. It then has to be specified if both swaps and moves to empty sites are possible (or only moves to empty sites), and what effect empty sites have in the satisfaction function. As for the satisfaction function, one possibility is to let empty sites contribute 0, meaning that agents are indifferent to empty sites in their neighborhood. Some of our arguments apply also in this setting with both swaps and moves to empty sites, but we leave a systematic analysis of the effect of empty sites for future work.\smallskip

\noindent\textbf{Higher dimensions.} It would be natural to analyze the model also in dimensions $d\geq 2$. Schelling for instance worked in $d=1$ and $d=2$. Our methods do not immediately apply for $d\geq 2$, but we would expect the behavior of the model with two types (and no empty sites) to be qualitatively similar at least for $d=2,3$.\smallskip

\noindent\textbf{The neighborhood.} In the present formulation of the model, the satisfaction function is based on the nearest neighbors in the $L^1$-metric, that is, the neighborhood that is considered when an agent evaluates a position consists of the neighboring sites in the graph. One could of course also look at other neighborhoods. Schelling for instance based his analysis on Moore neighborhoods, consisting of all sites in a unit cube centered at the agent (in $d=1$ this is of course equivalent to the nearest neighbors). It would be natural to consider also larger neighborhoods, consisting e.g.\ of sites at $L^1$-distance at most $n$ for some $n\geq 2$. Refined versions of the satisfaction function could then be introduced, where the influence of a given agent in a neighborhood is weighted by its distance to the central agent.

\subsection{Related work}\label{ss:ref}

We do not intend to survey the vast amount of work on Schelling-type models in the economic literature, but content ourselves with mentioning some previous work in the mathematical literature. One of the first examples is \cite{Pollicott}, where a finite size system is analyzed consisting of agents that are all prepared to move if this improve their situation -- there is hence no threshold for dissatisfaction as in the original model. Moves then occur to locations anywhere in the system, meaning that the dynamics is not local as in our model, and the results amount to a characterization of stable configurations in terms of minimizers for a certain variational problem.

Models based on pairwise swaps (rather than moves to empty sites) were first studied in \cite{Young}. In \cite{Young} also so-called perturbed Schelling dynamics was introduced, referring to models where agents act against their preference with some small probability. Perturbed models then dominated the field for a number of years, see e.g.\ \cite{Zhang}. We also mention the more recent work \cite{Randall}, where the mixing time of a two-dimensional perturbed model is analyzed. Unperturbed one-dimensional models based on pairwise swaps are studied in \cite{BarmI,Brandt}. In contrast to our model however, the size of the region in which swaps occur grows to infinity with the size of the system, implying that the dynamics is not asymptotically local. Similar models in two and three dimensions are analyzed in \cite{BarmIII}. Another model class, inspired by the Schelling dynamics, consists of processes where agents do not move, but shift type depending on the composition of the their neighborhood. Work on this model class include e.g.\ \cite{BarmII,scaling,Immor}.

\section{Preliminaries}

In this section we introduce basic tools that will be used in the analysis and derive some elementary results for our Schelling model.

\subsection{Moving agents, mass transport and ergodicity}

To start with, let $\bm{\eta}(0)$, $\{N_v\}_{v\in\Z^d}$ and $\{\boldsymbol{Q}_v\}_{v\in\Z^d}$ be as defined above. Further, let $Y_v$ stand for the triple consisting of $\eta_v(0)$, $N_v$ and $\boldsymbol{Q}_v$. Observe that this makes $\boldsymbol{Y}=(Y_v)_{v\in\Z^d}$ a set of i.i.d.\ random elements, which embodies the entire randomness of the model.

As a first tool, we introduce the so-called {\em mass transport principle}. To fit our purposes, let a {\em mass transport} be a random function $m:\ \Z^d \times \Z^d\to [0,\infty)$, that is invariant in law under translations of $\Z^d$.
For subsets $A,B\subseteq \Z^d$, the value \[m(A,B):=\sum_{u\in A, v\in B} m(u,v)\] should be interpreted as the total mass sent from sites in $A$ to sites in $B$.

\begin{lemma}[Mass-transport principle]\label{MTP}
	Let $m$ be a mass transport and $v\in\Z^d$. Then
	\begin{equation}
	\E\big[m(v,\Z^d)\big] = \E\big[m(\Z^d,v)\big].
	\end{equation}
\end{lemma}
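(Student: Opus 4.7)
The plan is to exploit the translation invariance of the law of $m$ to relabel indices so that the two expected total masses coincide term by term. First, for any $u\in\Z^d$, translating the lattice by $-v$ shows that $m(v,v+u)$ has the same distribution as $m(0,u)$, whence $\E[m(v,v+u)]=\E[m(0,u)]$. By the same token, translating by $u-v$ gives $m(v-u,v)\stackrel{d}{=} m(0,u)$, so $\E[m(v-u,v)]=\E[m(0,u)]$. Combining these two identities yields the pointwise equality
\[\E[m(v,v+u)] \;=\; \E[m(v-u,v)] \quad\text{for every } u\in\Z^d.\]

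With this in hand, I would sum over $u\in\Z^d$ and use Tonelli's theorem (valid since $m\geq 0$) to interchange expectation and summation. The left-hand side, after the reindexing $w=v+u$, becomes $\E\big[\sum_{w}m(v,w)\big]=\E[m(v,\Z^d)]$, while the right-hand side, after the reindexing $w=v-u$, becomes $\E\big[\sum_{w}m(w,v)\big]=\E[m(\Z^d,v)]$. Equating the two gives the desired identity.

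The only subtlety worth flagging is the interchange of expectation with an infinite sum, but non-negativity of $m$ lets Tonelli handle this without any integrability hypothesis, the equality being understood in $[0,\infty]$. The proof is therefore essentially a bookkeeping argument whose sole substantive input is translation invariance in law; there is no genuine obstacle to overcome.
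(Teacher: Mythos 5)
Your proof is correct and follows essentially the same route as the paper's: translation invariance gives the termwise identity $\E[m(v,v+u)]=\E[m(v-u,v)]$ (the paper writes this as $\E[m(v,u)]=\E[m(2v-u,v)]$), and non-negativity justifies summing over $u$ and interchanging sum and expectation via Tonelli. Your explicit flagging of the Tonelli step is a minor presentational difference only; no gap.
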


\begin{proof}
	As $m$ is non-negative and its law translation-invariant, it holds that
	\[	\E\big[m(v,\Z^d)\big]= \sum_{u\in\Z^d}\E\big[m(v,u)\big]= \sum_{u\in\Z^d}\E\big[m(2v-u,v)\big]= \E\big[m(\Z^d,v)\big].\vspace*{-1em}\]
\end{proof}

For a more general version of the mass transport principle, we refer to \cite[pp.\ 43]{BLPS}. Note that $\boldsymbol{Y}$ being translation invariant in law entails that any non-negative function $m$, which is a factor of $\boldsymbol{Y}$, necessarily constitutes a mass transport. The above lemma allows us to readily derive statements like the following:
\begin{lemma}\label{gentle}
	Consider the Schelling model on $\Z^d$ as described above and let $t\geq0,\ \epsilon>0$. The probability that no agent evaluates
	site $v\in \Z^d$ for a potential move during the time interval $(t,t+\epsilon]$ is bounded from below by $1-\epsilon$.
\end{lemma}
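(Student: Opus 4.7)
The strategy is to bound the probability of any evaluation by the first moment of a convenient overcount, then exploit the symmetry afforded by the mass-transport principle. Concretely, for each pair $(u,v)\in\Z^d\times\Z^d$ I define
\[ m(u,v) = \#\bigl\{n\in\N : \text{the $n$th jump of $N_u$ lies in $(t,t+\epsilon]$ and } Q_{u,n}=v-u\bigr\}. \]
This overcounts the genuine evaluations of $v$ during the interval, since it ignores whether the agent at $u$ is actually dissatisfied and whether the agent at $u+Q_{u,n}$ agrees to swap — every actual evaluation contributes at least one unit. Since $m$ is a factor of the i.i.d.\ family $\boldsymbol{Y}$, the remark following Lemma \ref{MTP} ensures that $m$ is a bona-fide mass transport.

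Applying the mass-transport principle yields $\E[m(\Z^d,v)] = \E[m(v,\Z^d)]$. The right-hand side equals $\epsilon$: because $\mu$ is a probability distribution on $\Z^d$, each jump of $N_v$ in the interval contributes exactly one unit to $\sum_{w}m(v,w)$, so $m(v,\Z^d) = N_v(t+\epsilon)-N_v(t)$, which has mean $\epsilon$ by the Poisson rate-$1$ property. Markov's inequality then gives
\[ \Prob\bigl(\text{some agent evaluates $v$ during $(t,t+\epsilon]$}\bigr) \leq \Prob\bigl(m(\Z^d,v)\geq 1\bigr) \leq \E[m(\Z^d,v)] = \epsilon, \]
and taking complements yields the claimed lower bound $1-\epsilon$.

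There is no serious technical obstacle. The only items to verify are measurability and translation invariance of $m$, both of which are immediate from the i.i.d.\ construction of $\boldsymbol{Y}$, plus the elementary observation that the dissatisfaction requirement can only decrease the count $m(\Z^d,v)$ relative to the number of genuine evaluation attempts. The content of the lemma is thus essentially that mass-transport symmetry transfers the question ``how much mass arrives at $v$?'' to the trivially computable question ``how much mass leaves $v$?''.
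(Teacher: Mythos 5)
Your proof is correct and follows essentially the same route as the paper: define a mass transport counting evaluation attempts, flip the direction with the mass-transport principle, bound the outgoing mass by the Poisson mean $\epsilon$, and finish with Markov's inequality. The only (cosmetic) difference is that you drop the dissatisfaction condition from the transport so that $m(v,\Z^d)=N_v(t+\epsilon)-N_v(t)$ exactly and the overcount appears on the receiving side, whereas the paper keeps that condition in the definition and places the inequality on the sending side; this slightly simplifies the verification that $m$ is a factor of $\boldsymbol{Y}$, but the argument is the same.
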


\begin{proof}
	Consider the mass transport $m(v,u;\boldsymbol{Y})$ given by the number of times during $(t,t+\epsilon]$, at which the Poisson process $N_v$ associated with site the $v$ has a jump, and there currently is a dissatisfied agent located at $v$, which then chooses to evaluate site $u$ for a potential move. Using Lemma \ref{MTP} and the fact that $\E[N_v(t+\epsilon)-N_v(t)]=\epsilon$, we get that
	\begin{equation}\label{mtpineq}
	\epsilon\geq\E\big[ m(v,\Z^d)\big]=\E \big[m(\Z^d,v)\big].
	\end{equation}
	Let $X$ denote the number of times an agent evaluates site $v$ during  $(t,t+\epsilon]$. The last expression in \eqref{mtpineq} equals $\E[X]$, and we obtain that
	$$\Prob(X\geq 1)\leq \E(X)\leq\epsilon.\vspace*{-2em}$$
\end{proof}

By the same argument, using a standard union bound, we immediately get a slight generalization to any finite set of vertices.
\begin{corollary}\label{nopass}
	Consider the Schelling model on $\Z^d$, let $t\geq0,\ \epsilon>0$ and $S\subseteq Z^d$ be a finite
	set of sites. Then, the probability that no agent placed outside $S$ evaluates a site inside $S$ for a potential
	move during the time interval $(t,t+\epsilon]$ is bounded from below by $1-\epsilon|S|$.
\end{corollary}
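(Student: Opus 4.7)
The statement is an immediate consequence of Lemma \ref{gentle} via a union bound, so the plan is short.

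The plan is to adapt the mass transport argument from Lemma \ref{gentle} by summing over the sites of $S$. For each $v \in S$, let $X_v$ denote the number of times during $(t, t+\epsilon]$ that some agent (at any location) evaluates $v$ for a potential move. Repeating the argument of Lemma \ref{gentle} verbatim yields $\E[X_v] \leq \epsilon$. Setting $X = \sum_{v \in S} X_v$, linearity of expectation and finiteness of $S$ give $\E[X] \leq \epsilon |S|$.

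Next, I would invoke Markov's inequality in the form $\Prob(X \geq 1) \leq \E[X]$, since $X$ is integer-valued and non-negative. This shows that the probability that \emph{some} site in $S$ is evaluated by \emph{some} agent during $(t, t+\epsilon]$ is at most $\epsilon |S|$. Finally, I would observe that the event in the statement — no agent placed outside $S$ evaluates any site in $S$ — is implied by (in fact strictly weaker than) the event $\{X = 0\}$. Therefore its probability is bounded below by $1 - \epsilon |S|$, as claimed.

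There is no real obstacle here: the mass transport computation has already been carried out in Lemma \ref{gentle}, and the generalization is the standard union bound step that the authors flag in the sentence preceding the corollary. The only mild subtlety worth mentioning is the choice to bound by the stronger event $\{X = 0\}$ rather than trying to use translation invariance on the restricted mass transport of outside-to-$S$ attempts; this is both simpler and immediately yields the stated bound.
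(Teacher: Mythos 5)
Your proposal is correct and is precisely the argument the paper intends when it writes ``by the same argument, using a standard union bound'': the first-moment bound from Lemma \ref{gentle} applied to each $v\in S$, summed via linearity of expectation, followed by Markov's inequality. Your remark that the event in the statement is weaker than $\{X=0\}$ (since $X$ also counts evaluations originating inside $S$) is a correct and harmless strengthening.
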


Also the probabilities of a site to be of a certain type are easily seen to stay constant over time. Let $\boldsymbol{p}:=(p_1,\dots,p_c)$ be the probability mass function for the initial state of a given vertex $v$, where $p_i$ is the probability that $v$ is occupied by an agent of type $i$ at time $t=0$.

\begin{lemma}\label{probconst}
	Consider the Schelling model on $\Z^d$ with $c$ agent types and let $v\in\Z^d$. For any $t\geq0,$ we have that
	\[\Prob\big(\eta_v(t)=1,\dots,\eta_v(t)=c\big)=\boldsymbol{p}.\]
\end{lemma}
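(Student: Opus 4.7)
The plan is to exploit Lemma \ref{MTP} together with the Kawasaki (type-preserving) nature of the dynamics. The underlying data $\boldsymbol{Y}$ is i.i.d., so its law, and therefore any measurable factor of it, is translation invariant on $\Z^d$. Consequently, quantities that track individual agents through the dynamics give rise to translation-invariant mass transports, which is exactly the setup for the mass transport principle.

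Concretely, I fix $t\geq 0$ and $i\in\{1,\dots,c\}$. Because an agent only ever changes location, never its type, I would define
\[ m(u,v) := \mathbf{1}\big\{\eta_u(0)=i,\ \text{the agent originally at }u\text{ occupies site }v\text{ at time }t\big\}. \]
This is a non-negative, translation-invariant mass transport. Summing over $v$ collapses the second condition, since the agent initially at $u$ occupies a unique site at time $t$; hence $\E[m(u,\Z^d)]=\Prob(\eta_u(0)=i)=p_i$. Summing over $u$ instead uses that the unique agent present at $v$ at time $t$ has a unique origin and that its type coincides with its initial type, yielding $\E[m(\Z^d,v)]=\Prob(\eta_v(t)=i)$. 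Applying Lemma \ref{MTP} equates the two, giving $\Prob(\eta_v(t)=i)=p_i$ for every $i$, which is precisely the assertion.

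The main technical point to address is that the trajectory of each initially placed agent should be almost surely well-defined for all times on the infinite lattice, so that the event appearing inside $m(u,v)$ is a genuine measurable function of $\boldsymbol{Y}$. This can be handled by a standard graphical construction: Corollary \ref{nopass} implies that, for any finite window and bounded time interval, the configuration in the window is determined by finitely many Poisson arrivals inside a controllable enlargement of the window. The process, and hence the path of every individual agent, can therefore be constructed consistently on growing finite boxes, after which the mass transport argument above carries through without further subtlety.
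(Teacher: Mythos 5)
Your argument is correct, but it takes a genuinely different route from the paper. You send unit mass from each site $u$ with $\eta_u(0)=i$ to the time-$t$ location of the agent initially at $u$, so that $m(u,\Z^d)=\mathbbm{1}_{\{\eta_u(0)=i\}}$ and, by type preservation, $m(\Z^d,v)=\mathbbm{1}_{\{\eta_v(t)=i\}}$; the mass-transport principle then finishes in one line. The paper instead counts \emph{moves}: it lets $m_i(u,v)$ be the number of times a type-$i$ agent moves from $u$ to $v$ during $[0,t]$, writes the conservation identity $\mathbbm{1}_{\{\eta_v(0)=i\}}+m_i(\Z^d,v)=m_i(v,\Z^d)+\mathbbm{1}_{\{\eta_v(t)=i\}}$, and applies Lemma \ref{MTP} together with the a priori bound $\E[m_i(v,\Z^d)]\leq 2t$, which is needed so that the two (equal, finite) expectations can be cancelled against each other. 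Your transport sends total mass at most one from each site, so you avoid that integrability step entirely; what you need instead is that each agent's trajectory is a well-defined measurable factor of $\boldsymbol{Y}$, which you rightly flag. Your appeal to Corollary \ref{nopass} for this is a little loose when $h=\infty$, since no deterministic finite enlargement of a window then screens it off; the standard fix is to note that for $\epsilon<1/2$ the graph of potential interactions during a time interval of length $\epsilon$ has expected degree less than $1$, hence a.s.\ finite components, so the dynamics is built on finite islands and iterated. This is routine, and the paper itself takes well-definedness of the process for granted. Both proofs are sound; yours is slightly cleaner, while the paper's requires no tracking of individual agent identities.
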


\begin{proof}
	Fix $v\in\Z^d,\ t>0$ and define two mass transports: As in the proof
	of Lemma \ref{gentle}, let $m(u,v)$ count the number of times an agent placed at $u$ evaluates site $v$ for a potential move, here however
	during the time period $[0,t]$. Furthermore, let $m_i(u,v)$ be the number of times during $[0,t]$ an agent of type $i\in\{1,\dots,c\}$ moves
	from site $u$ to $v$. A swap of two agents, placed at $u$ and $v$ respectively, can be initiated by either of the two, implying that
	\begin{equation}\label{dominated}
	m_i(u,v)\leq m(u,v)+m(v,u).
	\end{equation}
	Trivially, $\E[m(v,\Z^d)]\leq\E[N_v(t)]=t$. Hence, \eqref{dominated} combined with the mass transport principle (Lemma \ref{MTP}) entails that
	\begin{equation}\label{bdd}
	\E\big[m_i(v,\Z^d)\big]=\E\big[m_i(\Z^d,v)\big]\leq 2t.
	\end{equation}
	Now observe that
	$\mathbbm{1}_{\{\eta_v(0)=i\}}+m_i(\Z^d,v)=m_i(v,\Z^d)+\mathbbm{1}_{\{\eta_v(t)=i\}}$.
	Taking expectations and using \eqref{bdd} yields $p_i=\Prob\big(\eta_v(t)=i)$, as desired.
\end{proof}

Next, we provide a lemma that will be useful in dimension $d=1$ and is in essence Birkhoff's pointwise ergodic theorem. However, in contrast to its standard formulation (cf.\ for instance \cite[Theorem 7.2.1]{Durrett}), we also want to allow sequences of {\em random} nested finite subsets.
Let $T$ denote the shift to the left on $\Z$, i.e.\ $T(v)=v-1$. Given a two-sided sequence $\boldsymbol{X}=(X_v)_{v\in\Z}$, write
$T\boldsymbol{X}$ for the sequence in which all labels are shifted down by one, i.e.\ the value at $v$ is taken to be $X_{v+1}$ for all $v$.

For $d=1$, the above defined $\boldsymbol{Y}=(Y_v)_{v\in\Z}$ is an i.i.d.\ sequence. Hence, ergodicity implies that, for any integrable function $f$
of $\boldsymbol{Y}$, averages of $f$ values attributed to shifts of $\boldsymbol{Y}$ converge to the mean:

\begin{lemma}\label{ergodic}
	Let $\boldsymbol{Y}=(Y_v)_{v\in\Z}$ be as above and $f$ be a real-valued integrable function of $\boldsymbol{Y}$. Further, let
	$(S_n)_{n\in\N}$ be a nested (possibly random) sequence of finite sections of $\Z$, which are strictly increasing in size. Then
	\begin{equation}\label{erglim}
	\lim_{n\to\infty}\frac{1}{|S_n|}\,\sum_{k\in S_n} f(T^k\boldsymbol{Y})= \E\big[f(\boldsymbol{Y})\big]\quad\text{a.s.}
	\end{equation}
\end{lemma}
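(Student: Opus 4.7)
The plan is to reduce Lemma \ref{ergodic} to the classical Birkhoff pointwise ergodic theorem applied to $\boldsymbol{Y}$ in both time directions. Since $\boldsymbol{Y}=(Y_v)_{v\in\Z}$ is i.i.d., the left shift $T$ and its inverse $T^{-1}$ are both ergodic and measure-preserving. Applying Birkhoff to each gives a single full-probability event $\Omega^*$ on which
\[
\lim_{n\to\infty}\frac{1}{n}\sum_{k=0}^{n-1} f(T^k\boldsymbol{Y})=\E[f(\boldsymbol{Y})]=\lim_{n\to\infty}\frac{1}{n}\sum_{k=-n}^{-1} f(T^k\boldsymbol{Y}).
\]
Crucially, each of these is an a.s.\ statement about the whole deterministic integer axis at once, so any $\omega$-dependent subsequence of integers tending to $\pm\infty$ automatically inherits the same limit on $\Omega^*$.

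Next I would fix $\omega\in\Omega^*$ and write $S_n=[a_n,b_n]\cap\Z$, which is meaningful because each $S_n$ is a section of $\Z$. Nestedness forces $a_n$ non-increasing and $b_n$ non-decreasing, while $|S_n|\to\infty$ forces $b_n-a_n\to\infty$, so at least one of $-a_n$ and $b_n$ diverges to $+\infty$. I would treat the main case where both do; the two one-sided cases are strictly simpler and follow the same template. Introducing partial sums $P_m:=\sum_{k=0}^{m-1} f(T^k\boldsymbol{Y})$ for $m\ge 1$, $P_0:=0$, and $P_m:=-\sum_{k=m}^{-1} f(T^k\boldsymbol{Y})$ for $m\le -1$, one has $\sum_{k\in S_n}f(T^k\boldsymbol{Y})=P_{b_n+1}-P_{a_n}$ and $|S_n|=(b_n+1)+(-a_n)$. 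For $n$ large enough that $a_n\le-1$ and $b_n\ge 0$, this gives
\[
\frac{1}{|S_n|}\sum_{k\in S_n}f(T^k\boldsymbol{Y})
= \frac{(b_n+1)\cdot\frac{P_{b_n+1}}{b_n+1}+(-a_n)\cdot\frac{P_{a_n}}{a_n}}{(b_n+1)+(-a_n)},
\]
which is a convex combination, with strictly positive integer weights, of two Ces\`aro averages that both tend to $\E[f(\boldsymbol{Y})]$ on $\Omega^*$; hence so does the combination.

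To the extent there is an obstacle, it is convincing oneself that no extra care is needed in view of the random nature of $(S_n)$. The resolution is the point highlighted above: Birkhoff provides a single null exceptional set that works simultaneously for the entire deterministic integer axis, so convergence along a realization-dependent subsequence $\big(a_n(\omega),b_n(\omega)\big)$ diverging to infinity is automatic on $\Omega^*$ without any appeal to measurability of that subsequence. Everything else is elementary manipulation of partial sums.
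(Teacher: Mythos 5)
Your proposal is correct and follows essentially the same route as the paper's proof: apply Birkhoff's theorem to the shift in both directions (using that a factor of an i.i.d.\ sequence is ergodic), handle the one-sided growth cases directly, and in the two-sided case write the spatial average over $S_n$ as a random convex combination of a left-going and a right-going Ces\`aro average, each converging a.s.\ to $\E[f(\boldsymbol{Y})]$. Your explicit remark that the single Birkhoff null set handles all realization-dependent subsequences at once is exactly the point that makes the random $(S_n)$ harmless, and is the same (implicit) justification used in the paper.
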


\begin{proof}
	Bearing in mind that any integrable factor of an i.i.d.\ sequence is ergodic (with respect to $T$, see for instance \cite[Theorem 7.1.3]{Durrett}), the well-known pointwise ergodic theorem of Birkhoff (cf.\cite[Theorem 7.2.1]{Durrett}) implies that
	\begin{equation}\label{ergsimp}
	\lim_{n\to\infty}\frac{1}{n}\,\sum_{k=1}^n f(T^k\boldsymbol{Y})= \E\big(f(\boldsymbol{Y})\big)\quad\text{a.s.}
	\end{equation}
    By translation invariance, we can assume that $0\in S_1$ without loss of generality. Let $S_n=\{-U_n,\dots,V_n\}$, where $(U_n)_{n\in\N}$ and
	$(V_n)_{n\in\N}$ are (a.s.) non-decreasing sequences of $\N_0$-valued random variables such that almost surely $\lim_{n\to\infty}(U_n+V_n)=\infty$.
	
	If the sequence of sections grows one-sidedly, e.g.\ $\lim_{n\to\infty}U_n=u\in\N_0$ and $\lim_{n\to\infty}V_n=\infty$, the claim
	follows directly from \eqref{ergsimp}. If both $(U_n)_{n\in\N}$ and $(V_n)_{n\in\N}$ are unbounded, then $U_n\geq 1$ holds for
	$n$ large enough and we can write
	\[\frac{1}{|S_n|}\,\sum_{k\in S_n} f(T^k\boldsymbol{Y})=\frac{U_n}{|S_n|}\cdot\frac{1}{U_n}\,\sum_{k=1}^{U_n} f(T^{-k}\boldsymbol{Y})
	+\frac{V_n+1}{|S_n|}\cdot\frac{1}{V_n+1}\,\sum_{k=0}^{V_n} f(T^k\boldsymbol{Y}).\]
	
	\noindent Since, conditioned on the event $\{\lim_{n\to\infty}U_n=\lim_{n\to\infty}V_n=\infty\}$, the two random variables
	$\frac{1}{U_n}\,\sum_{k=1}^{U_n} f(T^{-k}\boldsymbol{Y})$ and $\frac{1}{V_n+1}\,\sum_{k=0}^{V_n} f(T^k\boldsymbol{Y})$
	both converge almost surely to $\E\big[f(\boldsymbol{Y})\big]$ by \eqref{ergsimp}, the claimed convergence of the left hand side --
	being a (random) convex combination of these two -- follows.\vspace*{-1em}
\end{proof}

\smallskip
\subsection{Separators and segregation}\label{ss:sep}

In the main part of the paper, we will focus on the one-dimensional model with two types of agents, say blue and red, i.e.\ $c=2$ and $p_1=1-p_2\in(0,1)$. In this setting, a configuration is determined by the color of one single vertex plus the locations of all nearest neighbor edges with endpoints occupied by agents of a different type. We will call these edges {\em separators}, as they represent boundaries which separate colors (cf.\ Figure \ref{separator}), and write $\mathcal{S}(t)$ for the (random) set of locations of separators at a given time $t$, that is,
\begin{equation}\label{S(t)}
\mathcal{S}(t)=\{\langle v,v+1\rangle;\;\eta_v(t)\neq\eta_{v+1}(t)\}.
\end{equation}
\begin{figure}[H]
	\centering
	\includegraphics[scale=0.86]{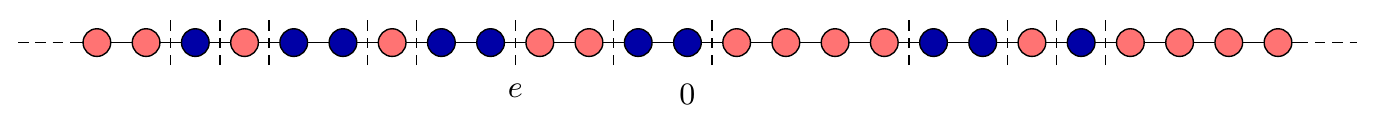}
	\caption{In the two-color the configuration is described by the separators.\label{separator}}
\end{figure}
Note that, since agents never impair their number of alike neighbors when moving, no new separators are ever created, but the dynamics moves existing separators in simple random walks on the edges of $\Z$ until they coalesce. Write $p(t)$ for the probability that a given edge is a separator at time $t\geq0$.

\begin{lemma}\label{decrease}
	Consider the Schelling model on $\Z$ with two types. Then, $t\mapsto p(t)$ is non-increasing in $t\geq0$.
\end{lemma}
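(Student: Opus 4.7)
The idea is to promote the informal observation made just before the lemma -- that the dynamics never creates new separators, only moves or annihilates them -- into a genealogy that injectively maps each separator alive at time $t+s$ back to a unique separator alive at time $t$, and then to apply the Mass Transport Principle (Lemma \ref{MTP}).

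First I would verify the structural claim. When agents at sites $v$ and $u$ swap (so $\eta_v \neq \eta_u$; otherwise the swap is vacuous), the separator status of every edge incident to $v$ and every edge incident to $u$ is flipped in the two-color setting, except that the edge $\langle v, u \rangle$ itself (in the case $u = v \pm 1$) remains a separator. A short case analysis, combined with the fact that both swapping agents must be dissatisfied -- so each of $v$ and $u$ is incident to at least one separator before the swap -- shows that the number of separators among the at most four affected edges after the swap is bounded above by the number before. Hence, using a deterministic local rule (e.g.\ match smallest to smallest among the affected edges), one may injectively assign each post-swap separator on those edges a pre-swap separator on them. Iterating over all swap events then produces, for every $0 \leq t \leq t+s$, a map $\phi_{t,t+s}$ sending each element of $\mathcal{S}(t+s)$ to a unique ancestor in $\mathcal{S}(t)$. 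The construction is well-defined almost surely, since by a mass transport argument analogous to Lemma \ref{gentle} only finitely many swap events affect any given edge during $[t, t+s]$.

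With the genealogy in hand, I would conclude via the Mass Transport Principle. Identifying the edge $\langle v, v+1 \rangle$ with $v \in \Z$, define
\[ m(e, e') = \mathbbm{1}\big\{e' \in \mathcal{S}(t+s) \text{ and } \phi_{t,t+s}(e') = e\big\}. \]
Translation invariance of the driving data $\boldsymbol{Y}$ together with the translation invariance of the matching rule make $m$ a valid mass transport. For any fixed $e$, one has $m(e, \Z) \leq \mathbbm{1}\{e \in \mathcal{S}(t)\}$ by injectivity of $\phi_{t,t+s}$, while $m(\Z, e') = \mathbbm{1}\{e' \in \mathcal{S}(t+s)\}$ since every element of $\mathcal{S}(t+s)$ has exactly one ancestor. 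Lemma \ref{MTP} now yields $p(t+s) = \E[m(\Z, e)] = \E[m(e, \Z)] \leq p(t)$, as desired. The main obstacle throughout is to make the genealogy genuinely measurable despite infinitely many swap events across $\Z$; this is handled by the locality of the matching rule, which ensures that $\phi_{t,t+s}$ restricted to any finite set of edges depends only on finitely many swap events, so there is no circularity in the construction.
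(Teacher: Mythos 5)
Your proof is correct, but it takes a genuinely different route from the paper's. The paper first invokes the ergodic theorem (Lemma \ref{ergodic}) to identify $p(t)$ with the almost sure spatial density of separators, and then shows that during a short interval $(t,t+\epsilon]$ there is a positive density of sites whose agents stay put; these act as barriers between which the separator count cannot increase (none are created and none can cross a frozen site), giving $p(t+\epsilon)\leq p(t)$ and hence monotonicity by iteration. You instead build an explicit injection of $\mathcal{S}(t+s)$ into $\mathcal{S}(t)$ and feed it into the mass transport principle, which handles an arbitrary time increment in one stroke, dispenses with ergodicity entirely, and upgrades the paper's informal remark that ``no new separators are ever created'' into a precise ancestral statement --- stylistically this is closer to how the paper itself uses Lemma \ref{MTP} in Lemmas \ref{probconst} and \ref{allessential}. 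Two points deserve more care in your write-up. First, in the adjacent-swap case the bound on the number of affected separators genuinely requires the non-worsening condition and not merely dissatisfaction: if $v$ and $v+1$ carry opposite colors and neither outer edge is a separator, both agents are dissatisfied, yet swapping them would create two new separators; such a swap is only excluded because it strictly lowers both satisfactions. (In the non-adjacent case, dissatisfaction of both agents, i.e.\ $s_v+s_u\geq 2$ incident separators, already suffices.) Second, the almost sure termination of the backward ancestral trace is not automatic from ``locality of the matching rule'': the trace jumps distances up to $h$, which is unbounded when $h=\infty$, so finiteness requires the Harris-type finite-cluster-of-influence argument that also underlies the well-definedness of the process; this is the price you pay for avoiding the paper's frozen-boundary trick, and it should be spelled out (e.g.\ for $s$ small and then composed over short time steps) rather than left implicit.
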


\begin{proof}
	Taking $S_n:=\{-n,\dots,n-1\},\ n\in \N,$ as sequence of nested increasing sets and
	$f(\boldsymbol{Y})=\mathbbm{1}_{\{\langle 0,1\rangle \text{ is a separator at time }t\}}$, it follows from Lemma
	\ref{ergodic} that the probability that an edge is a separator at time $t$ almost surely equals the spatial density of separators, i.e.
	\begin{equation}\label{sep.density}
	p(t)=
	\lim_{n\to\infty}\frac{1}{2n}\sum_{v\in S_n}\mathbbm{1}_{\{\eta_v(t)\neq\eta_{v+1}(t)\}}\quad\text{a.s.}
	\end{equation}
    As a side note, we mention that $p(0)=2p_1p_2\in(0,\frac12]$, with $p(0)=1/2$ in the symmetric case.
	
Let $\epsilon>0$. In view of Lemma \ref{gentle} and the fact that $\Prob\big(N_v(t+\epsilon)-N_v(t)=0\big)=\mathrm{e}^{-\epsilon}$, using the
	union bound we can conclude that the probability for a fixed site $v$ not to be involved in a swap during $(t,t+\epsilon]$ is at least
	$\mathrm{e}^{-\epsilon}-\epsilon$.
	Consequently, it holds -- again by Lemma \ref{ergodic} -- that for any fixed $\epsilon\in(0,\frac12]$ a.s.\ there exist strictly increasing
	sequences $(U_n)_{n\in\N}$ and $(V_n)_{n\in\N}$ of random variables with $U_1,V_1>0$, such that no agent that occupies a site in
	$\{-U_n,V_n;\; n\in\N\}$ at time $t$ moves until time $t+\epsilon$.
	Hence, no separator can enter $S_n:=\{-U_n,\dots,V_n\}$ during this time interval (and no new separators can emerge by definition of
	the model). We conclude, by means of \eqref{sep.density}, that $p(t+\epsilon)\leq p(t)$, whence $p(t)$ is non-increasing with $t$.
\end{proof}

We will refer to the set of vertices between two consecutive separators as a \emph{monochromatic section}. Next, we establish a connection between the separators and the monochromatic sections of a configuration and investigate their evolution in time.

\begin{definition}\label{monchrint}
	For a fixed vertex $v\in \Z$, let $I_v(t)$ denote the monochromatic section containing $v$ at time $t$, i.e.\
	$I_v(t)=\{\underline{v}(t),\dots,v,\dots,\overline{v}(t)\}$, where
	\begin{align*}\underline{v}(t)&:=\min\{u\leq v;\;\eta_w(t)=\eta_v(t),\text{ for all }u\leq w\leq v\}\quad\quad\\
	\overline{v}(t)&:=\max\{u\geq v;\;\eta_w(t)=\eta_v(t),\text{ for all }v\leq w\leq u\}.
	\end{align*}
	Furthermore, for $l\in\N$, let $q_l(t):=\Prob\big(|I_v(t)|\leq l\big)$ be the probability that a fixed vertex is contained in a monochromatic
	section of length at most $l$ at time $t$.
\end{definition}

The following result shows that the possible asymptotic lengths of monochromatic sections depend crucially on the moving horizon $h$:
\begin{proposition}\label{shortvanish}
In the Schelling model on $\Z$ with two types, we have that \[\lim_{t\to\infty}q_l(t)=0,\quad\text{for all }l\leq h.\]
\end{proposition}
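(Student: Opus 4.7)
The plan is to argue by contradiction using the monotonicity of $p(t)$ from Lemma \ref{decrease}: since $p(\cdot)$ is non-increasing and bounded below by $0$, the total (per unit length, time-integrated) number of separator-annihilating swap events is at most $p(0)\leq 1/2$. A direct computation of the change in the local separator count induced by a swap between a dissatisfied $C$-agent at $v$ and a dissatisfied $C'$-agent at $v+Q$ (with $|Q|\geq 2$; the adjacent case is similar) gives $2(a+b-2)$, where $a,b\in\{0,1\}$ are the numbers of same-color neighbors of the two agents before the swap; the bound $a,b\leq 1$ follows from the dissatisfaction condition. In particular, a swap strictly decreases the number of separators only when at least one of the two agents is a singleton ($a=0$ or $b=0$). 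It therefore suffices to show that the persistence of short monochromatic sections (of length $\leq l\leq h$) with positive density forces singleton production---and hence separator annihilations---at a positive rate.

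The mechanism I would exploit is that, because $l\leq h$, the boundary agent of a short monochromatic section has positive $\mu$-mass on moves reaching across the section and slightly beyond it. I plan to enumerate the finitely many possible local configurations in an $O(h)$-sized window around such a section and identify, for each, a bounded-length swap sequence leading to a singleton. Specifically, when another same-color section lies within horizon, a suitable sequence of $\mu$-admissible moves shortens the focal section down to length one, after which a swap between the resulting singleton and a dissatisfied opposite-color agent adjacent to the partner section annihilates two separators. Each step in the sequence happens with probability bounded below uniformly---as the product of a positive $\mu$-mass on the required displacement, the rate-$1$ Poisson-clock probability within a unit time interval, and the indicator of the (assumed) favorable local configuration. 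Using the mass transport principle in the style of Lemma \ref{decrease}, with a mass transport that counts separator-annihilating events attributable to each site, summing these rates gives $p(0)-p(t)\geq c\int_0^t q_l(s)\,ds$ for some constant $c>0$ depending on $l, h, \mu$ and $\boldsymbol{p}$. Consequently $\int_0^\infty q_l(s)\,ds<\infty$, and a short regularity argument---bounding the variation of $q_l(t)$ over small time intervals via Corollary \ref{nopass}, using that only finitely many edges around a fixed site affect the indicator $\mathbbm{1}\{|I_v(t)|\leq l\}$---upgrades this integrability to the pointwise limit $\lim_{t\to\infty}q_l(t)=0$.

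The main obstacle will be establishing a uniform positive rate of annihilations for \emph{isolated} short sections, i.e., those surrounded by opposite-color sections of length greater than $h$, so that no same-color section lies within horizon. Single swaps at such a section only drift it as a whole without creating singletons, and annihilation instead relies on the section eventually random-walking into another same-color section. Making this recurrence argument quantitative and uniform in $t$, while correctly coupling with the evolving surrounding configuration, is the technically most delicate part of the proof; it may require analyzing the joint random walks of several short sections and using ergodicity of $\boldsymbol{Y}$ to argue that typical initial configurations produce collisions at a rate sufficient to contribute to the lower bound $c\int_0^t q_l(s)\,ds$.
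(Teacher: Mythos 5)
Your overall strategy---derive a contradiction from the monotonicity and non-negativity of $p(t)$ by showing that a persistent positive density of short monochromatic sections forces separator annihilations at a uniformly positive rate---is the same as the paper's, and your accounting of when a swap strictly decreases the separator count is essentially correct (for non-adjacent swaps; in the adjacent case the decrement is $2$ rather than $4$ when both agents are singletons, but the qualitative conclusion that a strict decrease requires a singleton stands). The problem is the case you yourself flag as the ``main obstacle'': an isolated short section all of whose same-color partners lie beyond the moving horizon. You leave this case open and propose to close it with a quantitative recurrence/collision analysis of random-walking sections; that analysis is not carried out, and under the contradiction hypothesis nothing prevents essentially all short sections from being isolated in your sense. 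This is therefore a genuine gap, and the speculative recurrence argument is also the wrong tool.

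The paper closes this gap without any recurrence argument. Assuming $q_l(t_k)\geq\delta$ along a sequence of times, a Markov/pigeonhole estimate shows that a window $\Lambda_N$ of \emph{deterministic} size $N=N(\delta,l)$ contains, with probability at least $\delta/2$, two monochromatic sections entirely inside it, one of length $\leq l$. The crucial point is that the same-color partner need \emph{not} be within horizon $h$ of the short section: the short section is simply forced, during a time interval of length $\epsilon=1/(2N)$, to traverse its entirely-contained opposite-color neighbour one admissible step at a time (each step swaps the two outer agents of the short section, which are at distance $\leq l\leq h$, hence carry positive $\mu$-mass), until it merges with the same-color section beyond and two separators vanish. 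Since the traversal length is at most $N$ and the configuration inside $\Lambda_N$ takes finitely many values on the relevant event, this forced trajectory has probability bounded below by some $q>0$ uniformly in $t$; combined with Corollary \ref{nopass} (to shield $\Lambda_N$ from outside interference) and Lemma \ref{ergodic} (to convert the probability of a ``good'' window into a spatial density of good windows), this gives $p(t_k)-p(t_k+\epsilon)\geq \delta q/(2N)$ for every $k$, contradicting $p\geq 0$. Two further remarks: the resulting decrement is a complicated function of $\delta$, so your cleaner differential inequality $p(0)-p(t)\geq c\int_0^t q_l(s)\,ds$ with $c$ depending only on $l,h,\mu,\boldsymbol{p}$ is more than the argument yields---and more than is needed; and the step in which you ``shorten the focal section down to length one'' is not how the basic admissible swaps act (they translate a short section rather than shrink it), though that is a cosmetic issue next to the isolated-section gap.
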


\begin{proof}
	Fix $l\leq h$ and assume for contradiction that $q_l(t)\nrightarrow0$ as $t\to\infty$. Then, by the non-negativity of $(q_l(t))_{t\geq 0}$, there exist $\delta>0$ and an increasing sequence of time points $(t_k)_{k\in\N}$ such that $t_{k+1}-t_k\geq 1$ and $q_l(t_k)\geq\delta$ for all $k\in\N$. Let $\Lambda_n:=\{1,\dots,n\}$ and $\mathring{\Lambda}_n:=\{2,\dots,n-1\}$, and write $A_n(t)$ for the event that at least two monochromatic sections are entirely contained in $\mathring{\Lambda}_n$ at time $t$, one of which has length at most $l$. The average
	\[X_n(t)=\frac{1}{n}\cdot\sum_{v=1}^{n}\mathbbm{1}_{\{|I_v(t)|\leq l\}}\]
	is a $[0,1]$-valued random variable with expectation $q_l(t)$. Hence
	\begin{equation}\label{Markov}
	q_l(t)=\E\big[X_n(t)\big]\leq \tfrac{3l}{n}+ \Prob\big(X_n(t)>\tfrac{3l}{n}\big)
	\end{equation}
	and, by our assumption, we can choose $N\in\N$ big enough such that $A_N(t_k)$ has probability at least $\frac{\delta}{2}$ uniformly in $k$ (as $A_N(t)$ is trivially fulfilled whenever $\sum_{v\in\Lambda_N}\mathbbm{1}_{\{|I_v(t)|\leq l\}}$ exceeds $3l$).
	Further, let $\epsilon=\frac{1}{2N}$ and write $B_{N}(t)$ for the event that no agent placed outside $\Lambda_N$ at time $t$ evaluates a site inside $\Lambda_N$ during the time interval $(t,t+\epsilon]$ for a potential move. Finally, conditioned on $A_N(t)$, let $C_N(t)$ be the following event: During the time period $(t,t+\epsilon]$, there are sufficiently many swaps inside $\mathring{\Lambda}_N$
	of an endpoint of a monochromatic section of length $\leq h$ with the neighboring site (occupied by an agent of different color) on the
	opposite side of the section (cf.\ Figure \ref{snail}) such that this section extends by means of bumping into a section of the same color
	(which is at least partly contained in $\Lambda_N$) and, in addition to that, no other swaps involving an agent placed inside
	$\Lambda_N$ occur during $(t,t+\epsilon]$.
	\begin{figure}[H]
	\centering
	\includegraphics[scale=0.86]{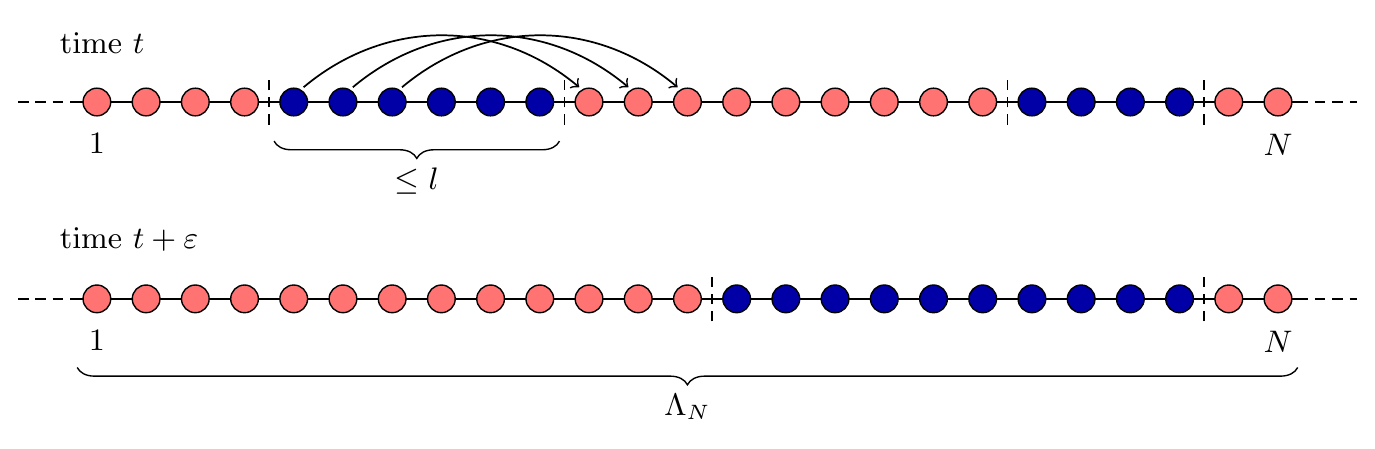}
	\caption{An illustration of the event $C_N(t)$: two monochromatic sections merge.\label{coalesce}}
\end{figure}

	We claim that $\Prob\big(A_N(t_k)\cap B_N(t_k)\cap C_N(t_k)\big)\geq p$ uniformly in $k$ for some $p>0$: To ease notation, let $R(v,t_1,t_2)$ be the randomness driving the dynamics at site $v$ during the time interval
	$(t_1,t_2]$, that is, $R(v,t_1,t_2)$ is the random entity given by $\{N_v(t);\;t_1<t\leq t_2\}$ and $\{Q_{v,n};\;N_v(t_1)<n\leq N_v(t_2)\}$. Since $B_N(t)$ is measurable with respect to $\{R(v,t,t+\epsilon);\;v\notin\Lambda_N\}$ and $A_N(t)\cap C_N(t)$ with respect to $\{R(v,0,t);\;v\in\Z\}\cup\{R(v,t,t+\epsilon);\;v\in\Lambda_N\}$, these two events are independent. From Corollary \ref{nopass} and the choice of $\epsilon$, it follows that $\Prob\big(B_N(t)\big)\geq\frac12$. Since $N=N(\delta,l)$ is deterministic and finite, conditioning on the finitely many possible configurations inside $\Lambda_N$ for which  $A_N(t)$ holds, we get (using \eqref{horizon} and the independence of $N_v, \boldsymbol{Q}_v$) a strictly
	positive lower bound for $\Prob(C_{N}(t))$, say $q>0$. In conclusion, it holds for all $k\in\N$ that
	$\Prob\big(A_N(t_k)\cap B_N(t_k)\cap C_N(t_k)\big)\geq \frac{\delta q}{4}$.
	
	Now partition $\Z$ into sections of length $N$, for instance $\{\Lambda_N+jN;\;j\in\Z\}$, and call a section {\em good} at time $t$, if the number of separators in the section decreases by at least $2$ during the time interval $(t,t+\epsilon]$, while the agents at its leftmost and rightmost site stay put. Observe that $A_N(t)\cap B_N(t)\cap C_N(t)$ guarantees that $\Lambda_N$ is a good at time $t$. Since no separator can
	jump over, enter or leave a good section, the number of separators on a line segment in between good sections is non-increasing.
	Lastly, by Lemma \ref{ergodic}, the density of good sections (a.s.) equals the probability of a section to be good. Let $e_v$ be short for the edge $\langle v,v+1\rangle$. Combining the above observations, we obtain that, for any $k\in \N$, almost surely
	\begin{align*}
	p(t_k)-p(t_k+\epsilon)&= \lim_{i,j\to\infty}\frac{1}{(i+j)N-1}\,\sum_{v=-iN+1}^{jN-1}
	\mathbbm{1}_{\{e_v\in \mathcal{S}(t_k)\}}-\mathbbm{1}_{\{e_v \in \mathcal{S}(t_k+\epsilon)\}}\\[-0.1cm]
	&\geq \frac{1}{N}\cdot\lim_{i,j\to\infty}\frac{1}{i+j}\,\sum_{m=-i}^{j-1}2\cdot\mathbbm{1}_{\{\Lambda_N+mN\text{ is good at time }t_k\}}\\
	&\geq\frac{\delta q}{2N}>0.
	\end{align*}
	Since this holds uniformly in $k$, in view of Lemma \ref{decrease} it contradicts the non-negativity of $p(t)$ and therefore disproves the assumption that $q_l(t)\nrightarrow0$ as $t\to\infty$.
	\vspace*{-1em}
\end{proof}

\section{Infinite moving horizon}\label{sec:unbounded}

In this section, we prove Theorem \ref{d=1,unbounded} where the asymptotics is settled for the two-color Schelling model in $d=1$ with $h=\infty$, that is, when arbitrarily long moves are possible. To this end, we first conclude from Proposition \ref{shortvanish} that, when $h=\infty$, the density of separators tends to 0 as $t\to\infty$.
\vspace*{1em}
\begin{lemma}\label{ptzero}
With $p(t)$ and $q_l(t)$ as defined in Section \ref{ss:sep}, the following are equivalent:\\[0.2cm]
$ (i) \lim\limits_{t\to\infty}p(t) =0 \qquad\qquad (ii) \lim\limits_{t\to\infty}q_l(t) =0, \text{ for all } l\in\N.$
\end{lemma}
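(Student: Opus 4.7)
The plan is to prove the two implications separately, by counting separators and short monochromatic sections inside large windows and passing to the limit via Lemma \ref{ergodic}.

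For (i) $\Rightarrow$ (ii), I would observe that whenever $|I_v(t)|\leq l$ the right endpoint $\overline{v}(t)$ of the monochromatic section through $v$ must lie in $\{v,v+1,\ldots,v+l-1\}$, and the edge $\langle\overline{v}(t),\overline{v}(t)+1\rangle$ is necessarily a separator. A union bound combined with translation invariance therefore yields
\[
q_l(t)\leq \sum_{j=0}^{l-1}\Prob\bigl(\langle v+j,v+j+1\rangle\in\mathcal{S}(t)\bigr)=l\cdot p(t),
\]
so $p(t)\to 0$ immediately implies $q_l(t)\to 0$ for every $l\in\N$.

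For (ii) $\Rightarrow$ (i), which is the more substantial direction, I would fix $l\in\N$ and estimate the number of separator edges inside the window $\Lambda_n:=\{-n,\ldots,n\}$ by classifying the monochromatic sections meeting $\Lambda_n$ as \emph{short} (length $\leq l$) or \emph{long} (length $>l$). Each short section meeting $\Lambda_n$ contributes at least one vertex to $\{v\in\Lambda_n:|I_v(t)|\leq l\}$, while each long section entirely contained in $\Lambda_n$ occupies at least $l+1$ of its sites. Since at most two long sections can straddle the boundary of $\Lambda_n$, the total number of sections meeting $\Lambda_n$, and hence the number of separators whose left endpoint lies in $\Lambda_n$, is bounded above by
\[
\bigl|\{v\in\Lambda_n:|I_v(t)|\leq l\}\bigr|+\frac{2n+1}{l+1}+2.
\]

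Dividing by $2n+1$ and letting $n\to\infty$, Lemma \ref{ergodic} applied to the two relevant bounded factors of $\boldsymbol{Y}$ gives $p(t)\leq q_l(t)+\tfrac{1}{l+1}$. To conclude, given $\varepsilon>0$ I would first choose $l$ so large that $\tfrac{1}{l+1}<\varepsilon/2$ and then, invoking assumption (ii), pick $T$ with $q_l(t)<\varepsilon/2$ for all $t\geq T$; this yields $p(t)<\varepsilon$ for $t\geq T$, i.e.\ $p(t)\to 0$. The only slightly delicate point is the accounting of the at-most-two long sections straddling the boundary of $\Lambda_n$, but since this contributes an $O(1)$ error it disappears after normalization by $|\Lambda_n|$.
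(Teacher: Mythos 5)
Your proof is correct and follows essentially the same route as the paper's: both directions come down to comparing the spatial density of separators with the density of sites lying in short monochromatic sections, using Lemma \ref{ergodic} to identify these densities with $p(t)$ and $q_l(t)$. The only differences are cosmetic --- you obtain (i)$\Rightarrow$(ii) by a direct union bound rather than the density relation $p(t)\geq q_l(t)/l$, and you package (ii)$\Rightarrow$(i) as the clean direct inequality $p(t)\leq q_l(t)+\tfrac{1}{l+1}$ where the paper argues by contrapositive, but the underlying counting is the same.
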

\begin{proof} This equivalence readily follows from Lemma \ref{ergodic}, together with the relation between the density of sites in short monochromatic sections and the density of separators: On the one hand, in a given configuration, trivially there has to be a shift in color after at most $l$ sites that are included in monochromatic sections of length at most $l$. Hence, Lemma \ref{ergodic} directly implies that $p(t)\geq \frac{q_l(t)}{l}$. On the other hand, again by Lemma \ref{ergodic}, for $p(t)\geq \delta$ and $n$ sufficiently large the probability that  a section comprising $n$ sites at time $t$ contains at least $\frac{\delta n}{2}+2$ separators (and thus $\frac{\delta n}{2}$ monochromatic sections entirely) is larger than $\frac12$. Since at least half of the contained monochromatic sections must then be of length at most $\frac4\delta$, another application of Lemma \ref{ergodic} gives $q_{\lceil\frac{4}{\delta}\rceil}(t)\geq \frac{\delta}{4}$.
\end{proof}

Having established the above lemma, Theorem \ref{d=1,unbounded} follows without much further effort.
\vspace*{1em}

\begin{proof}[Proof of Theorem \ref{d=1,unbounded}]
	For $h=\infty$, Proposition \ref{shortvanish} and Lemma \ref{ptzero} together immediately imply that $\lim_{t\to\infty} p(t)=0$. This already proves the first claim, as by the union bound it trivially holds that
	\[\Prob\big(\eta_u(t)\neq\eta_v(t)\big)\leq |u-v|\cdot p(t).\]
	Pointwise convergence can easily be ruled out using the non-existence of local limits:
	Define $A_{v,1}$ to be the event that site $v$ eventually stays occupied by a blue agent -- that is, $A_{v,1}:=\{\lim_{t\to\infty}\eta_v(t)=1\}$ -- and
	assume for contradiction that $\Prob(A_{v,1})>0$. By Lemma \ref{ergodic}, we then have a positive density of eventually blue sites. Let $V$ be the first eventually blue site to the right of the origin and choose $n$ large enough to ensure that $\Prob(V\leq n)\geq p_1+\frac{p_2}{2}$. Then	
	 \[\Prob\big(\eta_0(t)=2\big)\leq\Prob\big(V>n\big)+\Prob\big(\eta_V(t)=2\big)+\sum_{v=0}^{n-1}\Prob\big(\eta_v(t)\neq\eta_{v+1}(t)\big).\]
        In view of \eqref{limit} and the fact that $V$ is eventually blue, all summands but the first one tend to 0 as $t\to\infty$. Thus,
	$\Prob\big(\eta_0(t)=2\big)<p_2$ for $t$ large enough, which contradicts Lemma \ref{probconst}. The same argument rules out that $v$ is eventually occupied by a red agent only. It follows that $\eta_v(t)$ a.s.\ never stops shifting between the values 1 and 2, forcing $|\eta_u(t)-\eta_v(t)|$ to take on the value $1$ at arbitrarily large times, as simultaneous swaps a.s.\ do not occur. This concludes the proof.
\end{proof}\vspace*{-0.5em}

\section{Finite moving horizon}\label{sec:bounded}

We now move on to the case when $h<\infty$ in the two-color model, and aim at proving Theorem \ref{d=1,bounded}. From Proposition \ref{shortvanish} we know that, in the unbounded case ($h=\infty$), the length of a monochromatic section grows continually in the sense that, for any $v\in\Z$, it holds that
\begin{equation}
\lim_{t\to\infty}\Prob\big(|I_v(t)|\leq l\big)=0\quad\text{for all }l\in\N.
\end{equation}
In the bounded case ($h\in\N$), the position of a monochromatic section of length strictly larger than $h$ is stable to the effect that there cannot be any swaps of agents placed on different ends/sides of the section. Monochromatic sections of length at most $h$ on the contrary can move one step at a time by means of swaps between one of their endpoints (either $\underline{v}(t)$ or $\overline{v}(t)$) with the neighboring agent of different color on the other end, see
Figure \ref{snail}.

\begin{figure}[H]
	\centering
	\includegraphics[scale=0.87]{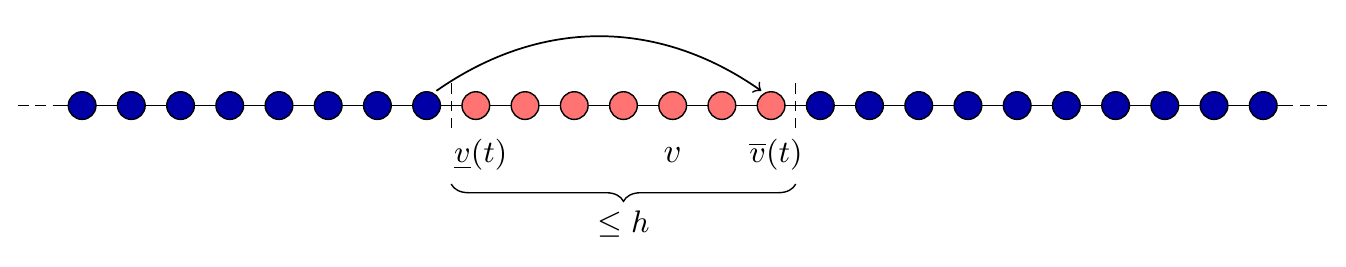}
	\caption{``Autonomous moving'' is only possible for monochromatic sections of length at most $h$.\label{snail}}
\end{figure}
 To capture the difference stemming from the length of the corresponding monochromatic section, we introduce two shades of the colors:
\begin{definition}\label{shades} Consider the two-color model on $\Z$.
	\begin{enumerate}[(a)]
		\item  A site $v\in\Z$ is called {\em light blue} at time $t$ if $\eta_v(t)=1$ and $|I_v(t)|\leq h$, and correspondingly {\em  dark blue} at time $t$ if $\eta_v(t)=1$ and $|I_v(t)|>h$. The shades {\em light red} and {\em dark red}
		are defined accordingly, with $\eta_v(t)=2$ instead.
		\item A site $v\in\Z$ is called {\em essentially blue} ({\em essentially red}) if there exists an almost surely finite time $T$,
		such that $v$ is not dark red (dark blue) at any time $t\geq T$.
	\end{enumerate}
\end{definition}

Note that the term ``eventually blue/red'' in the proof of Theorem \ref{d=1,unbounded} is stronger than the notion of ``essentially blue/red'' as it forbids both shades of the other color after an almost surely finite time. Observe further that, in Proposition \ref{shortvanish}, we already established that the light shades (i.e.\ the short monochromatic sections) gradually vanish as time evolves, even when $h\in\N$.

To prove Theorem \ref{d=1,bounded}, it will be useful to track the monochromatic sections dynamically, relating them to their current right endpoint, rather than relating them to fixed vertices, as in Definitions \ref{monchrint} and \ref{shades}.  At any given time $t$, the locations of separators (cf.\ \eqref{S(t)}) define the monochromatic sections in between them. We now relate these sections at different time points.
\begin{definition}\label{timeline}
Conditioned on the initial configuration, pick a monochromatic section, i.e.\ $I(0)=\{u,\dots,v\}$ such that
\[\eta_{u-1}(0)\neq\eta_u(0)=\eta_{u+1}(0)=\ldots=\eta_v(0)\neq\eta_{v+1}(0),\]
and let $\{I(t)\}_{0\leq t<T}$ be its {\em timeline} (in the sense of a chronological evolution), where $I(t)=\{U(t),\dots,V(t)\}\subseteq\Z$ is such
that for all $0\leq t<T$:
\begin{enumerate}[(i)]
	\item $I(t)$ is a monochromatic section, i.e.\ $\eta_{U(t)-1}(t)\neq\eta_{U(t)}(t)=\eta_{U(t)+1}(t)=\ldots=\eta_{V(t)}(t)\neq\eta_{V(t)+1}(t)$;
    \item $I(t)$ does not change color, i.e.\ $\lim_{s\nearrow t}\eta_{V(s)}(s)=\eta_{V(t)}(t)$;
    \item the right endpoint does not jump further than one site at a time, i.e.\ $|V(t)-\lim_{s\nearrow t}V(s)|\leq1$.
\end{enumerate}
Write $T\in[0,\infty]$ for the (random) first time at which the configuration does not feature any section that continues the timeline of $I(0)$ in the above
sense.
\end{definition}

Note that swaps typically move the endpoints of monochromatic sections merely to neighboring vertices, however, when a singleton agent swaps and two monochromatic sections of the same color coalesce, the timelines of both the singleton and the monochromatic section to its left end, while the merged one is the continuation of the monochromatic section to its right, see Figure \ref{monochrom} for an illustration. Furthermore, observe that in the timeline $\{I(t)\}_{0\leq t<T}$ of a fixed monochromatic section $I(0)$ in the initial configuration, any vertex $w\in\Z$ can change between $w\in I(t)$ and $w\notin I(t)$ an arbitrary number of times.
	\begin{figure}[H]
	\centering
	\includegraphics[scale=0.87]{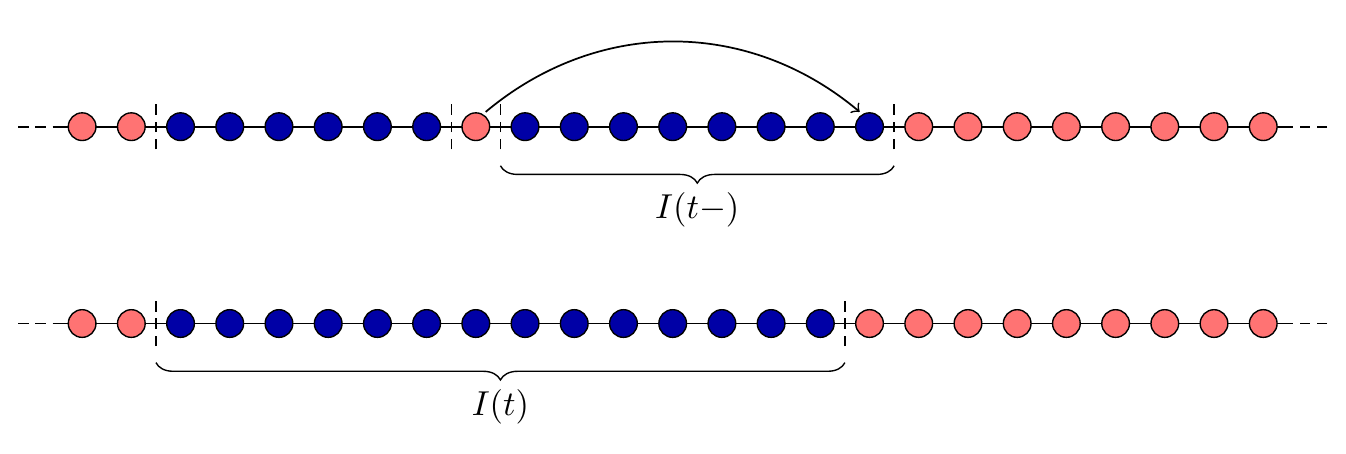}
	\caption{Coalescence ends the timeline of two monochromatic sections.\label{monochrom}}
\end{figure}

While Proposition \ref{shortvanish} showed that the probability of a site to be either light blue or light red tends to $0$, we are
now going to show that (a.s.) no vertex can change between the two dark shades infinitely often.

\begin{lemma}\label{allessential}
	For any $v\in \Z$, we have that $\Prob(v\textup{ is essentially blue})+\Prob(v\textup{ is essentially red})=1$ and the two events are disjoint.
	\end{lemma}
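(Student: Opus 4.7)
The statement has two parts: disjointness of the two events, and that their probabilities sum to $1$.

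\textbf{Disjointness.} If $v$ were simultaneously essentially blue and essentially red, there would exist an a.s.\ finite time $T$ after which $v$ is neither dark-blue nor dark-red, hence light. The events $E_T := \{v \text{ is light for all } t \geq T\}$ are monotone increasing in $T$, and each satisfies $\Prob(E_T) \leq \Prob(v \text{ is light at time } T) = q_h(T)$. Since $q_h(T) \to 0$ by Proposition~\ref{shortvanish}, continuity of measure forces $\Prob(\bigcup_T E_T) = 0$.

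\textbf{Sum equals one.} Equivalently, we must show $\Prob(v \text{ is dark-blue i.o.\ and dark-red i.o.}) = 0$. The key structural observation is that, in the bounded regime, a monochromatic section of length $> h$ is impenetrable to swaps (whose range is at most $h$), so $v$'s color is frozen while $v$ sits in the interior of a long section. Consequently, every transition between a dark-blue and a dark-red phase at $v$ must pass through a light phase in which the monochromatic section around $v$ has length $\leq h$; each such transition is mediated either by a separator coalescence in a bounded neighborhood of $v$ (when the long section containing $v$ collapses) or by a short section of length $\leq h$ ``creeping'' past $v$ via endpoint-to-endpoint swaps as in Figure~\ref{snail}.

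My plan is then an accounting argument in the spirit of the proof of Proposition~\ref{shortvanish}. Supposing toward contradiction that $v$ undergoes infinitely many dark-to-dark transitions with positive probability, Lemma~\ref{ergodic} upgrades this to a positive spatial density of sites so affected. Each transition contributes either to the cumulative count of separator coalescences---bounded globally via the non-increase of $p(t)$ (Lemma~\ref{decrease}) applied to the finite initial density $p(0) = 2 p_1 p_2$---or to the cumulative time $v$ spends in short sections, whose asymptotic density is $0$ by Proposition~\ref{shortvanish}. Combining both controls via mass transport should yield the desired contradiction. The \textbf{main obstacle} is the attribution step: unlike in Proposition~\ref{shortvanish}, where the constructed local rearrangement explicitly destroys separators, a creeping short section may traverse $v$ with no net separator reduction, so one must carefully exploit the nontrivial time $v$ must spend inside such a creeping section to extract the needed quantitative footprint.
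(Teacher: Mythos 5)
Your disjointness argument is fine and matches the paper's (the paper phrases it as $\Prob(\liminf_{t\to\infty}\{|I_v(t)|\leq h\})\leq\lim_{t\to\infty}q_h(t)=0$, invoking Proposition~\ref{shortvanish}). The problem is the main claim, where what you offer is a plan whose decisive step you yourself flag as unresolved --- and that step is genuinely where the difficulty lies. Your two proposed controls do not cover the ``creeping'' scenario: Proposition~\ref{shortvanish} only bounds the \emph{probability at a fixed time} that $v$ sits in a short section, and this says nothing about the \emph{number of excursions} of short sections past $v$ (a quantity that could a priori be infinite even while the occupation density of light phases tends to $0$; indeed, ruling out persistent inclusion of $v$ in short sections of the opposite colour is deliberately deferred in the paper to Lemma~\ref{essentailneighbors} and the proof of Theorem~\ref{d=1,bounded}, and is \emph{not} available at this stage). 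Likewise, the global separator budget $p(0)=2p_1p_2$ only charges coalescences; a short section traversing $v$ and then growing to length $>h$ changes $v$ from one dark shade to the other while leaving the separator count untouched, so nothing in your accounting pays for it. As written, the proof does not close.

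The paper's way around this is the idea you are missing: charge not coalescences but \emph{mobility transitions of timelines}. Track each monochromatic section $I(0)$ through its timeline (Definition~\ref{timeline}) and record the times at which it crosses the threshold length $h$ in either direction (becomes mobile: $|I(t-)|=h+1$, $|I(t)|=h$; becomes immobile: the reverse). The key quantitative input is that whenever a section becomes mobile it is necessarily adjacent to another mobile section, and two adjacent mobile sections annihilate one of their timelines (one traverses the other, cf.\ Figure~\ref{coalesce}) with probability at least some uniform $r>0$ before anything else happens; hence the number of ``becomes mobile'' events in a single timeline is stochastically dominated by a geometric random variable with parameter $r$. Since mobile/immobile events alternate and a timeline ends at most once, the total number of threshold crossings per timeline has expectation at most $2/r+2$. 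Defining a mass transport in which the initial right endpoint of $I(0)$ spreads mass $1$ over the (at most $h+1$) sites of $I(t)$ at each such event, one gets $\E[m(v,\Z)]<\infty$, while every dark-blue/dark-red transition at $v$ forces $v$ to receive mass at least $\tfrac{1}{h+1}$ (a transition requires a section containing $v$ to cross the threshold or to vanish). Lemma~\ref{MTP} then gives that $v$ undergoes only finitely many such transitions almost surely, which is exactly the statement that $v$ is essentially blue or essentially red. This geometric-domination bound on threshold crossings is the ingredient your ``quantitative footprint'' needs and does not currently have.
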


\begin{proof}
	In order to define a suitable mass transport based on the timeline of a monochromatic section, we first describe three crucial events in such a chronological evolution:
\begin{enumerate}[(i)]
	\item the section becomes mobile, i.e.\ $|I(t-)|=h+1$, $|I(t)|=h$;
	\item the section becomes immobile, i.e.\ $|I(t-)|=h$, $|I(t)|=h+1$;
	\item the section vanishes, i.e.\ $I(T-)=\{w\}$ and the agent at
	$w$ is involved in a swap at time $T$ (which ends the timeline of $I(0)$).
\end{enumerate}
	The mass transport $m$ is defined as follows: If site $v$ is the rightmost site of a monochromatic section $I(0)$ in the initial configuration, it sends mass $1$ equally distributed to the sites of $I(t)$ for each time $0\leq t<T$, at which either (i) or (ii) occurs. If additionally (iii) occurs, the site $v$ sends two times mass $1$, equally distributed to the nodes of $I_l\cup\{w\}$ and $I_r\cup\{w\}$ respectively, where $I_l$, $I_r$ are the monochromatic sections of opposite color left and right of $w$,
	which coalesce at time $T$. No other mass is sent. In particular, no mass is sent from $v$ if $\eta_v(0)=\eta_{v+1}(0)$.

It is crucial to observe that, in scenario (i), the interval $I(t)$ must be neighbored by a mobile monochromatic section (i.e.\ of length $\leq h$).
Whenever two monochromatic sections of length $\leq h$ are next to each other, there is a non-zero probability that one ends the timeline of the other by traversing it, cf.\ Figure \ref{coalesce}. This probability can be bounded from below
(uniformly in the lengths of the two sections in question and the surrounding configuration) by some $r>0$. Consequently, the number of times (i) can happen is stochastically dominated by a geometric random variable (on $\N$) with parameter $r$. The probability that $v$ is the rightmost site of some $I(0)$ equals $2p_1p_2$.
Since (i) and (ii) must alternate and (iii) can happen at most once, we find that
\begin{equation}\label{finitemass}
\E\big[m(v,\Z)\big]\leq 2p_1p_2\cdot\big(\tfrac2r+2\big),
\end{equation}
where the extra $+2$ accounts for the additional mass sent if (iii) occurs.

Now fix $v\in\Z$ and note that an immediate change of site $v$ from dark blue to dark red (or the reverse) is impossible. Instead, to change from one dark shade to the other, one of the following two scenarios has to occur:
\begin{itemize}
\item Case 1: The length $|I_v(t)|$ of the monochromatic section currently including $v$, changes from $>h$ to $\leq h$ and once mobile, the section either moves such that it no longer includes $v$ or ceases to exist in the sense of (iii), with $v$ in place of $w$.
\item Case 2: A mobile monochromatic section of opposite color moves such that it includes $v$ and then extends to length
$>h$ by means of either (ii) or a coalescence caused by a vanishing neighboring section, as described in (iii).
\end{itemize}
As a consequence, every such change causes $v$ to receive at least mass $\frac{1}{h+1}$. By Lemma \ref{MTP} and \eqref{finitemass} this can happen only finitely many times. Finally, Proposition \ref{shortvanish} implies that
\begin{align*}\Prob(v\textup{ is both essentially blue and essentially red})
						&=\Prob\Big(\liminf_{t\to\infty}\big\{|I_v(t)|\leq h\big\}\Big)\\
						&\leq \lim_{t\to\infty}q_h(t)=0,
\end{align*}
which concludes the proof.
\end{proof}

Next, we show that, if an essentially red site is a neighbor of an essentially blue site, then the occupation of both sites has to fixate. Crucial in the argumentation will be the fact that, for any short monochromatic section (i.e.\
$I(t)=\{u,\dots,v\}$ with $|I(t)|\leq h$), with probability bounded away from 0 (uniformly in its color, length, $t$ and the
surrounding configuration) the next change in its chronological evolution is a move of the whole section to the right, by means
of a swap of the agents at sites $u$ and $v+1$ (likewise to the left: agents at $u-1$ and $v$ swap, cf.\ Figure \ref{snail}).

\begin{lemma}\label{essentailneighbors}
	An essentially blue site $v$ and an essentially red site $u$ can only be neighbors if both
	$\lim_{t\to\infty}\eta_v(t)$ and $\lim_{t\to\infty}\eta_u(t)$ exist (and then equal $1$ and $2$ respectively).
\end{lemma}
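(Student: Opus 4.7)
The plan is to split the argument into two pieces: (i) show that the joint configuration ``$v$ dark blue and $u=v+1$ dark red'' is absorbing for the dynamics at $\{v,u\}$, and (ii) show that this configuration is reached in a.s.\ finite time when $v$ is essentially blue and $u$ is essentially red (the case $u=v-1$ being symmetric).

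For (i), suppose at $t^*$ we have $v$ dark blue and $u$ dark red. Then by the definition of the dark shades, $v-h,\dots,v$ are blue and $u,\dots,u+h$ are red. The only red agents within $L^1$-distance $\le h$ of $v$ sit in the long red block at $u$, and each of them has both its neighbours red (except $u$ itself, whose left neighbour $v$ is blue); in every case, swapping $v$ with such a red agent would place the blue agent among two red sites, lowering its satisfaction from $0$ to $-2$, so the swap is denied. An analogous statement rules out any swap involving $u$. Moreover, the strictly interior sites of the two long blocks have satisfaction $+2$, so they neither initiate nor serve as effective targets for swaps (the target of an effective swap must be dissatisfied). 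Consequently no swap touching $v$ or $u$ can occur after $t^*$, and $\eta_v\equiv 1$, $\eta_u\equiv 2$ from then on.

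For (ii), let $T$ be an a.s.-finite random time after which $v$ is never dark red and $u$ is never dark blue. For $t\ge T$ the pair $(v,u)$ has shades in $\{\mathrm{DB},\mathrm{LB},\mathrm{LR}\}\times\{\mathrm{DR},\mathrm{LR},\mathrm{LB}\}$; adjacency rules out $(\mathrm{DB},\mathrm{LB})$ and $(\mathrm{LR},\mathrm{DR})$ immediately, since two adjacent same-colour sites share one monochromatic section, forcing both to be of the same shade. In every other non-absorbing combination, at least one of $v,u$ is contained in (or is directly adjacent to) a short monochromatic section, and the hint recorded before the lemma provides a uniform lower bound $r>0$ on the probability that the next change in the timeline of such a short section is a lateral shift by one step. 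The crucial observation is that certain such lateral shifts directly violate the essentially labels: in state $(\mathrm{DB},\mathrm{LR})$, a rightward move of $u$'s short red section glues $u$ onto the long blue block at $v$ and turns $u$ dark blue, contradicting $u$ essentially red; symmetrically, in $(\mathrm{LB},\mathrm{DR})$ a leftward move turns $v$ dark red, contradicting $v$ essentially blue. Conditional on the essentially labels, each visit of the system to a non-absorbing state therefore ends via a non-forbidden transition with conditional probability $\le 1-r$, and a Borel--Cantelli / geometric-decay argument shows that a.s.\ only finitely many such visits occur; so the system eventually enters the absorbing state from (i).

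The main obstacle is handling the doubly light combinations $(\mathrm{LB},\mathrm{LR})$, $(\mathrm{LR},\mathrm{LB})$, $(\mathrm{LB},\mathrm{LB})$, $(\mathrm{LR},\mathrm{LR})$, where a single lateral move of the adjacent short section need not immediately produce a dark forbidden shade but may just transition the system to another light non-absorbing state (for instance the red section at $u$ may merge onto a blue section at $v$ that is itself of length strictly less than $h$, yielding only another light blue section). One resolves this by iterating the hint: each resulting light state again features a short section to which the hint applies, so with probability uniformly bounded away from zero a bounded chain of consecutive lateral moves extends one of the short sections to length exactly $h$ adjacent to a long matching-colour block, after which one more lateral move produces a forbidden dark shade and the geometric-decay bookkeeping closes. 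Combining (i) and (ii) gives $\lim_{t\to\infty}\eta_v=1$ and $\lim_{t\to\infty}\eta_u=2$ a.s., as claimed.
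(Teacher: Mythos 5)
Your step (ii) contains the paper's key idea for the singly-light states $(\mathrm{DB},\mathrm{LR})$ and $(\mathrm{LB},\mathrm{DR})$: a one-step move of the short section in the ``wrong'' direction glues the site onto the long block of the opposite colour, producing a forbidden dark shade, and a conditional Borel--Cantelli argument turns the uniform bound $r>0$ into a contradiction with the essential labels. But the structure you wrap around this idea has a genuine gap. Step (i) is false: the state ``$v$ dark blue, $u$ dark red'' is \emph{not} absorbing. Your local computation correctly shows that no swap involving $v$ or $u$ themselves can succeed while both are dark, so their \emph{colours} cannot change in that state; their \emph{shades} can, however. The blue block containing $v$ has its right endpoint pinned at $v$, but its left endpoint is free: if the red section adjacent to it on the left has length $\le h$, that section can advance one step to the right (precisely the swap of Figure \ref{snail}), eroding the blue block by one site without any swap touching $v$ or $u$. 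If the blue block had length exactly $h+1$, the site $v$ thereby turns light blue, and from that moment swaps involving $v$ are again possible. So ``$\eta_v\equiv 1$ from then on'' does not follow from the computation at time $t^*$; it is essentially the statement to be proved.

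The second problem is your treatment of the doubly-light states: lateral moves translate a monochromatic section without changing its length, so ``a bounded chain of consecutive lateral moves extends one of the short sections to length exactly $h$'' is not a mechanism the dynamics provides. Growth requires coalescence with a same-coloured section, and there is no uniform bound on the distance to the nearest long block of matching colour, hence no uniform per-excursion probability can be extracted this way. The paper sidesteps both difficulties by arguing by contradiction rather than by absorption: assuming non-fixation, it uses Proposition \ref{shortvanish} to guarantee that, conditionally, the pair is in the both-dark state with probability bounded below at every time $t\ge t_0$, and it then examines only the \emph{first} exit from that state, which necessarily lands in one of the two singly-light states you already handle correctly (exactly one site turns light, and its short section has the long block of the other colour right next to it). The doubly-light states never need to be analysed, and no absorption property is required.
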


\begin{proof}
	Fix $u\in\{v-1,v+1\}$. Let $A$ be the event that $v$ is essentially blue, $u$ is essentially red and at least one of $\lim_{t\to\infty}\eta_u(t)$ and $\lim_{t\to\infty}\eta_v(t)$ does not exist. Assume for contradiction that $\Prob(A)=s>0$. For $t\in[0,\infty)$, define $A(t)$ to be the event that neither $v$ is dark red nor $u$ is dark blue at any time $t'\geq t$. By Definition \ref{shades} and Proposition \ref{shortvanish}, we can choose $t_0$ large enough, such that $\Prob(A(t_0)\cap A)\geq\frac{s}{2}$ and $q(t)\leq\frac{s}{5}$ for all $t\geq t_0$. Consequently, conditioned on $A(t_0)\cap A$, for each $t\geq t_0$, the site $v$ will be dark blue and $u$ dark red at time $t$ with probability at least $\frac{1}{10}$.
	
	Given that at least one of $\lim_{t\to\infty}\eta_u(t)$ and $\lim_{t\to\infty}\eta_v(t)$ does not exist and $u$ and $v$ are dark blue and dark red respectively at time $t$, there must be a first time $\xi>t$, at which exactly one of $u$ and $v$, say $u$, changes to the light shade of its current color. By the fact mentioned just before the lemma, there is a probability bounded away from 0 that the next change in the chronological evolution of the short red monochromatic section $I_u(\xi)$ is a move away from $v$, which turns $u$ dark blue. By the conditional Borel-Cantelli
	Lemma (see for instance \cite[Corollary 6.20]{Foundations}), conditioned on $A(t_0)\cap A$ this will happen almost surely for
	some $t\geq t_0$, contradicting the definition of $A(t_0)$. The same argument can be applied if instead $v$ turns light blue before $u$ turns light red after time $t$.
\end{proof}

With these lemmas in hand, we are all set for proving Theorem \ref{d=1,bounded}. The final step is to show that, with probability 1, any given site is not
only either {\em essentially} blue or red, but also {\em eventually} blue or red. In other words, what has to be ruled out is the persistent inclusion of an essentially blue (red) site in short red (blue) monochromatic sections.
\vspace*{1em}

\begin{proof}[Proof of Theorem \ref{d=1,bounded}]
Let $B_v$ denote the event that site $v$ is essentially blue and $\lim_{t\to\infty}\eta_v(t)$ does not exist. Conditioned on
$B_v$, the sites $v-1$ and $v+1$ also have to be (a.s.) essentially blue by Lemmas \ref{allessential} and \ref{essentailneighbors}. Further, if $v$ is
part of a red monochromatic section of length $\leq h$ at arbitrarily large times, this (a.s.)\ holds for its neighboring sites as
well (by the fact stated just before Lemma \ref{essentailneighbors} and the conditional Borel-Cantelli Lemma). Let $B:=\bigcap_{u\in\Z} B_u$. Using induction, we arrive at
\[\Prob(B)=\lim_{n\to\infty}\Prob\bigg(\bigcap_{u=v-n}^{v+n}B_u\bigg)=\Prob(B_v).\] Note that $B$ is a translation invariant event and must hence have probability either 0 or 1 by ergodicity (Lemma \ref{ergodic}). The same argument applies to $R:=\bigcap_{u\in\Z} R_u$, where $R_v$ denotes the event that $v$ is essentially red
and $\lim_{t\to\infty}\eta_v(t)$ does not exist. We conclude that, under the assumption that with positive probability $\lim_{t\to\infty}\eta_v(t)$ does not exist, Lemma \ref{allessential} implies that either $B_v$ or $R_v$ must have probability 1. This, however, is impossible: If $v$ is almost surely essentially blue, it follows from Proposition \ref{shortvanish} that
\[\Prob\big(\eta_v(t)=1\big)\geq 1-\Prob(v\text{ is dark red at time }t)-q(t)>p_1\]
for $t$ large enough, contradicting Lemma \ref{probconst}. Similarly, $v$ cannot be a.s.\ essentially red, which disproves our assumption and with that concludes the proof.
\end{proof}

\section{Lazy and easily pleased agents}\label{lazy}

In this short section, we prove Propositions \ref{prop:lazy} and \ref{prop:tau<0}, stating that the non-fixation in the unbounded case is eliminated when agents are lazy (that is, when agents only move when they strictly improve their situation) and when the satisfaction bar is lowered, respectively. Hence, for any $h\in \N\cup\{\infty\}$, the state $\eta_v(t)$ converges almost surely for all $v\in\Z$.

\begin{proof}[Proof of Proposition \ref{prop:lazy}]
As pointed out in Section \ref{ss:res}, when the agents are lazy, every swap involves at least one {\em singleton agent}, i.e.\ an agent currently having no neighbor of its own color. The key to verify Proposition \ref{prop:lazy} is to track the occurrence, or rather disappearance, of singletons in the configuration. To this end, let us define the following mass transport, based on the timelines of the monochromatic sections: Site $u$ sends mass $1$ to sites $v$ and $w$ respectively, if $u$ is the rightmost site of a monochromatic section $I(0)$ in the initial configuration, the timeline of which ends by means of a switch of the agents at $v$ and $w$ at some time $T$, when $I(T-)=\{v\}$. As every swap includes at least one singleton agent joining a monochromatic section of her color, from Lemma \ref{MTP} it follows
that $2$ is an upper bound on the expected number of swaps at a given site. Consequently, the number of times at which the color
at vertex $v$ changes has to be a.s.\ finite and this implies the claim.
\end{proof}

By the same token, we get the analogous result for the model with satisfaction threshold $\tau<0$ (irrespectively of whether the agents are lazy or not), since in this regime only agents with {\em no} neighbor of their own color will be dissatisfied.

\section{Multiple types}\label{sec:ex}

Now let us go back to the original model (with $\tau=0$ and non-lazy agents) but consider the case with more than two types of agents, i.e.\ $c\geq 3$. Introducing more than two types of agents entails a qualitative change in the dynamics: For $c\geq 3$, it is possible that singleton agents (i.e.\ agents with no neighbor of their own kind) stay singletons after a swap (namely when maximally dissatisfied agents move to equally bad places as illustrated in Figure \ref{jumping_separator}); we will refer to this as a {\em singleton jump}.
As a consequence, separators (being edges with unlike incident neighbors as before) can now move more than one step at a time. Observe that this kind of swap was impossible in the two type case ($c=2$), as it would have involved an agent with two alike neighbors, which is unwilling to move. In addition to that, for $c\geq 3$, two separators that coalesce can either both vanish or collapse into one, depending on the combination of incident colors.

\begin{figure}[H]
\centering
\includegraphics[scale=0.88]{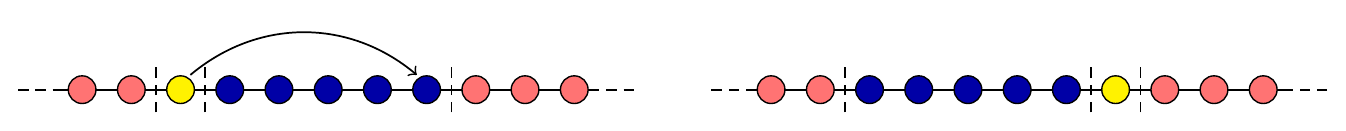}
\caption{With more than two types of agents, separators do not move in simple random walks anymore.\label{jumping_separator}}
\end{figure}



\subsection{Unbounded moving horizon}

Despite the qualitative changes described above, the result of Theorem \ref{d=1,unbounded}, valid when $h=\infty$, extends to the case with more than two types of agents.
\begin{theorem}\label{c>2}
	Theorem \ref{d=1,unbounded} still holds with $c\geq 3$ agent types.
\end{theorem}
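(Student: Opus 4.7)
The proof closely parallels that of Theorem \ref{d=1,unbounded}. First I would verify that the preliminaries of Section \ref{ss:sep} carry over to $c\geq 3$: the separator density $p(t)$, Lemma \ref{decrease} (monotonicity of $p(t)$), and Lemma \ref{probconst} all extend without change. The only structural property needed is that swaps never strictly lower either agent's satisfaction, so the number of unlike-neighbor edges incident to each involved site cannot increase, and hence no new separators are ever created; the mass-transport arguments then depend only on translation invariance of $\boldsymbol{Y}$, which is unaffected by the number of types. Note that with $c\geq 3$ separators may now jump more than one step via a singleton jump and two coalescing separators may produce a single separator rather than vanishing, but these modifications preserve the crucial non-increase property.

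The main task is to extend Proposition \ref{shortvanish} to $c\geq 3$. I would follow the original contradiction scheme verbatim but redefine the event $A_N(t)$. For $c=2$, consecutive monochromatic sections automatically alternate in color, so the event ``$\mathring{\Lambda}_N$ contains two entirely interior sections, one of length $\leq l$'' automatically produces a triple $S_1,S_2,S_3$ with $c(S_1)=c(S_3)\neq c(S_2)$ and $|S_2|\leq l$, permitting $S_2$ to dissolve by endpoint swaps. For $c\geq 3$ the matching flanks are no longer automatic, so I would take $A_N(t)$ to be precisely the event that $\mathring{\Lambda}_N$ contains such a same-color-flanked triple with $|S_2|\leq l$. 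Under the standing hypothesis $q_l(t_k)\geq \delta$, Lemma \ref{ergodic} gives a density $\geq \delta/l$ of short sections at each $t_k$; among the at most $(c-1)^2$ possible ordered outer-color pairs, pigeonhole forces a positive fraction to be diagonal, yielding a uniform-in-$k$ positive density of same-color-flanked short triples and hence a positive lower bound $\delta'>0$ on $\Prob(A_N(t_k))$ for $N$ large. The event $C_N(t_k)$—``$S_2$ dissolves via endpoint swaps against $S_1$ or $S_3$, merging them into one monochromatic section, with no other interference inside $\Lambda_N$''—is then defined exactly as before. The finite-configuration enumeration giving a lower bound on $\Prob(C_N(t)\mid A_N(t))$, the independence of $B_N(t)$ from $A_N(t)\cap C_N(t)$, the good-section bookkeeping, and the final contradiction with the non-negativity of $p(t)$ all carry over verbatim. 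Lemma \ref{ptzero} is type-blind and extends directly, so we conclude $\lim_{t\to\infty}p(t)=0$.

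Part (a) is then immediate from the union bound $\Prob(\eta_u(t)\neq\eta_v(t))\leq |u-v|\,p(t)$. For part (b) I would argue by contradiction as in the two-type case. Set $A_{v,i}:=\{\lim_{t\to\infty}\eta_v(t)=i\}$ and suppose $\Prob(A_{v,i})>0$ for some $i\in\{1,\dots,c\}$; by Lemma \ref{ergodic} the density of sites eventually occupied by type $i$ is positive. Let $V$ be the smallest such site strictly to the right of the origin and pick $n$ so that $\Prob(V\leq n)\geq p_i+\epsilon$ for some $\epsilon>0$. Then
\begin{equation*}
\Prob(\eta_0(t)\neq i)\leq \Prob(V>n)+\Prob\big(V\leq n,\ \eta_V(t)\neq i\big)+\sum_{v=0}^{n-1}\Prob\big(\eta_v(t)\neq\eta_{v+1}(t)\big),
\end{equation*}
and the last two terms vanish as $t\to\infty$ by part (a) and by construction of $V$, forcing $\limsup_{t\to\infty}\Prob(\eta_0(t)\neq i)\leq 1-p_i-\epsilon$, contradicting the equality $\Prob(\eta_0(t)\neq i)=1-p_i$ from Lemma \ref{probconst}. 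Applying this for every $i$ rules out pointwise convergence and yields $|\eta_u(t)-\eta_v(t)|$ taking each nonzero value at arbitrarily large times a.s.

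The main obstacle is the redefinition of $A_N(t)$: one must show that the required same-color-flanked triple occurs with probability bounded away from $0$ uniformly in $k$, despite the loss of the i.i.d.\ structure of the configuration at time $t_k$. This is where the combinatorial pigeonhole combined with ergodicity (Lemma \ref{ergodic}) is essential, and is the only place where the multi-type setting genuinely deviates from the two-type argument.
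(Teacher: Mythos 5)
Your reduction to finding a short monochromatic section flanked on both sides by sections of the \emph{same} color is a natural instinct, but the pigeonhole step you use to produce such a triple is invalid, and it sits at exactly the point you yourself identify as the crux of the multi-type case. Pigeonhole over the $(c-1)^2$ ordered pairs of outer colors only guarantees that \emph{some} pair carries a fraction at least $1/(c-1)^2$ of the short sections; nothing forces that pair to be diagonal. For $c=3$, a configuration whose section colors cycle periodically as $B,R,Y,B,R,Y,\dots$ contains no same-color-flanked section whatsoever, and you give no argument that the (highly non-i.i.d.) configuration at time $t_k$ must contain such triples with positive density. The paper's proof avoids this entirely: it assumes $p(t)\to p>0$, takes a window with at least $\binom{c}{2}+3$ separators so that two interior separators $\alpha$ and $\beta$ are incident to the same unordered pair of colors, and then lets the (say) yellow section incident to $\alpha$ and the blue section incident to $\beta$ consume each other by repeated swaps between their respective endpoints. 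Those two sections need not be adjacent --- this is precisely where $h=\infty$ enters --- and no same-color flanking is needed. Without replacing your $A_N(t)$ by something of this kind, the argument does not go through.

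A second, smaller but still genuine, gap: you assert that Lemma \ref{decrease} extends ``without change'' because ``the number of unlike-neighbor edges incident to each involved site cannot increase.'' That implication is false for $c\geq 3$: in a singleton jump the arriving singleton can turn a target site with one incident separator into one with two (the global separator count still does not increase, because the vacated site loses at least one, but the count is no longer locally monotone). Since separators can thereby cross the frozen boundary sites $\{-U_n,V_n\}$, the windowed ergodic argument in the proof of Lemma \ref{decrease} breaks down; the paper repairs it in Lemma \ref{multidecrease} by introducing a mass transport counting singleton jumps, whose net expected flow into a site vanishes by Lemma \ref{MTP}. Your treatment of Lemma \ref{ptzero} and of part (b) of the theorem is correct and matches what the paper does.
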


\noindent In order to prove this generalization, we will reuse and slightly modify parts of the proof of Proposition \ref{shortvanish} and then conclude
as in Theorem \ref{d=1,unbounded}. First we extend Lemma \ref{decrease} to the case with $c\geq 3$ types by means of a minor adjustment of its proof.
\begin{lemma}\label{multidecrease}
Consider the Schelling model on $\Z$ with $c\geq 3$ types. Then, $t\mapsto p(t)$ is non-increasing in $t\geq0$.
\end{lemma}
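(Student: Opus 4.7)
The plan is to follow the proof of Lemma \ref{decrease} almost verbatim, with one key modification: verifying that the underlying local non-increase of separator counts still holds for $c \geq 3$ types. First I would analyze a single swap event between agents of types $A \neq B$ at sites $u$ and $v$. Let $s_u, s_u'$ (respectively $s_v, s_v'$) denote the number of separator edges incident to $u$ (respectively $v$) before and after the swap. The swap conditions — that neither agent's satisfaction decreases — translate directly into $s_u' \leq s_v$ and $s_v' \leq s_u$: these simply say that $B$ placed at $u$ has no more non-alike neighbors than $B$ had at $v$, and analogously for $A$. When $|u-v| \geq 2$ the affected edges are disjoint and summing gives $s_u' + s_v' \leq s_u + s_v$, i.e.\ the total separator count on the (up to four) affected edges is non-increasing. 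When $|u-v|=1$ the shared edge $\langle u, v\rangle$ remains a separator throughout (since $A \neq B$ both before and after), and the change in separator count on the three affected edges reduces to
\[([\eta_{u-1}\neq B] - [\eta_{u-1}\neq A]) + ([\eta_{v+1}\neq A] - [\eta_{v+1}\neq B]),\]
which is again non-positive by the same two inequalities. This is the multi-type counterpart of the statement ``no new separators are created'' used in Lemma \ref{decrease}: at every swap the global separator count is non-increasing.

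Given this local non-increase, the rest of the proof follows the structure of Lemma \ref{decrease}: Lemma \ref{ergodic} identifies $p(t)$ with the spatial density of separators, and I would argue that this density does not grow between $t$ and $t+\epsilon$. The main obstacle I expect is that the singleton jumps available for $c \geq 3$ (illustrated in Figure \ref{jumping_separator}) allow a single long-range swap with one endpoint inside and one outside a window to shift up to two separators into the window, compensated by a loss on the outside. A static-boundary argument as in Lemma \ref{decrease} therefore does not immediately block separator influx into $S_n$.

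To circumvent this I would work with a deterministic window $S_n = \{-n,\ldots,n\}$ and decompose the net change in the separator count of $S_n$ during $(t,t+\epsilon]$ into contributions from swaps entirely inside $S_n$ (non-positive by the first step) and from boundary-crossing swaps. A short counting argument — noting that for each $Q \in \Z$ there are at most $\min(\|Q\|_1, 2n+1)$ sites $v \in S_n$ with $v+Q \notin S_n$ — yields
\[\E\bigl[\#\{\text{boundary-crossing swaps in } (t,t+\epsilon]\}\bigr] \leq 2\epsilon \cdot \E\bigl[\min(\|Q\|_1, 2n+1)\bigr],\]
and each such swap changes the inside count by at most $\pm 2$. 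Since $\|Q\|_1$ is a.s.\ finite, $\Prob(\|Q\|_1 \geq k) \to 0$, whence $\frac{1}{n}\E[\min(\|Q\|_1,n)] = \frac{1}{n}\sum_{k=1}^n \Prob(\|Q\|_1 \geq k) \to 0$. Taking expectations in $\E[\#\{\text{separators in } S_n \text{ at time } s\}] = 2n\cdot p(s)$ and dividing by $2n$ then gives $p(t+\epsilon) - p(t) \leq o(1)$ as $n \to \infty$, hence $p(t+\epsilon) \leq p(t)$.
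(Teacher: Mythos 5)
Your argument is correct, but it takes a genuinely different route from the paper's. You first make explicit the local fact that an accepted swap never increases the total separator count (via $s_u'\leq s_v$ and $s_v'\leq s_u$, which is exactly the right reformulation of the acceptance rule, and which the paper leaves implicit), and you then control the boundary by a first-moment flux estimate on deterministic windows: the expected number of boundary-crossing swaps is $O\big(\epsilon\,\E[\min(\lVert Q\rVert_1,2n+1)]\big)=o(n)$, using only that $\lVert Q\rVert_1<\infty$ a.s. The paper instead recycles the construction from Lemma \ref{decrease}: it picks random boundary sites whose agents provably stay put during $(t,t+\epsilon]$, observes that the only remaining mechanism by which a separator can enter such a window is a \emph{singleton jump}, and cancels the resulting correction term via the mass-transport principle (Lemma \ref{MTP}, $\E[m(\Z,0)]=\E[m(0,\Z)]$) combined with the ergodic theorem (Lemma \ref{ergodic}). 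Your version is more elementary -- no ergodic theorem, no need to identify singleton jumps as the sole source of influx -- and makes transparent exactly which property of $\mu$ is used; the paper's version does more setup work, since the singleton-jump mass transport it introduces is reused in the proof of Theorem \ref{c>2}. One bookkeeping point you should absorb into your estimate: a swap with \emph{both} endpoints in $S_n$ but involving one of the extreme sites $\pm n$ affects an edge straddling the boundary, so its contribution to the count of interior edges need not be non-positive; lumping these finitely many sites in with the boundary-crossing swaps adds only an $O(\epsilon)$ term in expectation, which likewise vanishes after dividing by $2n$.
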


\begin{proof}
As in the proof of Lemma \ref{decrease}, fix $\epsilon>0$ and choose strictly increasing sequences $(U_n)_{n\in\N}$ and $(V_n)_{n\in\N}$
of random variables (with $U_1,V_1>0$), such that all agents at a site in $\{-U_n,V_n;\; n\in\N\}$ stay put during $(t,t+\epsilon]$. Further,
let $m(u,v)$ count the number of times a singleton jump occurs from site $u$ to $v$ during this time period.
As $\E\big[ m(v,\Z)\big]\leq\epsilon$, Lemmas \ref{ergodic} and \ref{MTP} yield that almost surely
\begin{align*}
p(t)&=\lim_{n\to\infty}\frac{1}{U_n+V_n}\sum_{v=-U_n}^{V_n-1}\mathbbm{1}_{\{\eta_v(t)\neq\eta_{v+1}(t)\}}\\
	  &\geq\lim_{n\to\infty}\frac{1}{U_n+V_n}\sum_{v=-U_n}^{V_n-1}\mathbbm{1}_{\{\eta_v(t+\epsilon)\neq\eta_{v+1}(t+\epsilon)\}}+m(\Z,v)-m(v,\Z)\\
	  &=p(t+\epsilon)+\E \big[m(\Z,0)-m(0,\Z)\big] = p(t+\epsilon).
\end{align*}\vspace*{-3	em}

\end{proof}

With this in place, we can prove Theorem \ref{c>2}.\vspace{1em}

\begin{proof}[Proof of Theorem \ref{c>2}]
	Roughly following the proof of Proposition \ref{shortvanish}, assume for contradiction that $\lim_{t\to\infty}p(t)=p>0$ and let $A_n(t)$
	be the event that the number of separators in $\mathring{\Lambda}_n:=\{2,\dots,n-1\}$ at time $t$ is at least $\binom{c}{2}+3$; this
	ensures that there are two separators, incident to the same two colors, and that the monochromatic sections that they are incident to are
	entirely contained in $\mathring{\Lambda}_n$. With help of the random variable
	\[X_n(t)=\frac{1}{n-2}\cdot\sum_{v=2}^{n-1}\mathbbm{1}_{\{\eta_v(t)\neq\eta_{v+1}(t)\}},\]
	similar to \eqref{Markov}, we can deduce that $\Prob\big(A_n(t)\big)\geq p-\frac{\binom{c}{2}+2}{n-2}$ and take $N$ big enough to ensure that
	$\Prob\big(A_N(t)\big)\geq \frac{p}{2}$.
	Choose $\epsilon=\frac{1}{2N}$ and $B_N(t)$ as in the proof of Proposition \ref{shortvanish}. Given $A_N(t)$, pick one such pair of
	separators (incident to say blue and yellow) and label them $\alpha$ and $\beta$. Depending on whether they are
	neighbors or not, incident to these two, we find in total either 3 or 4 monochromatic sections. Let $I_1(t)$ be the yellow
	section incident to $\alpha$ and $I_2(t)$ be the blue section incident to $\beta$ (cf.\ Figure \ref{multiculti}).
   	
	Given $A_N(t)$, we define $C_N(t)$ to be the following event: All swaps involving an agent inside $\Lambda_N$ during the time period
	$(t,t+\epsilon]$ occur between the site in $I_1$ currently incident to $\alpha$ and the site in $I_2$ currently incident to $\beta$ and,
	further, there are exactly $\min\{|I_1(t)|,|I_2(t)|\}$ such swaps -- ending the timeline of at least one of $I_1(t)$ and $I_2(t)$. By the same line of
	reasoning as in the proof of Proposition \ref{shortvanish}, the event $A_N(t)\cap B_N(t)\cap C_N(t)$ has positive probability, bounded from below
	by some $q>0$ (uniformly in $t$), and ensures that the section $\Lambda_N$ is {\em good} at time $t$, which here means that the number of
	separators it contains decreases by at least $1$ during the time interval $(t,t+\epsilon]$, while the agents at its leftmost and rightmost site stay put.

    \begin{figure}
   	\centering
   	\includegraphics[scale=0.86]{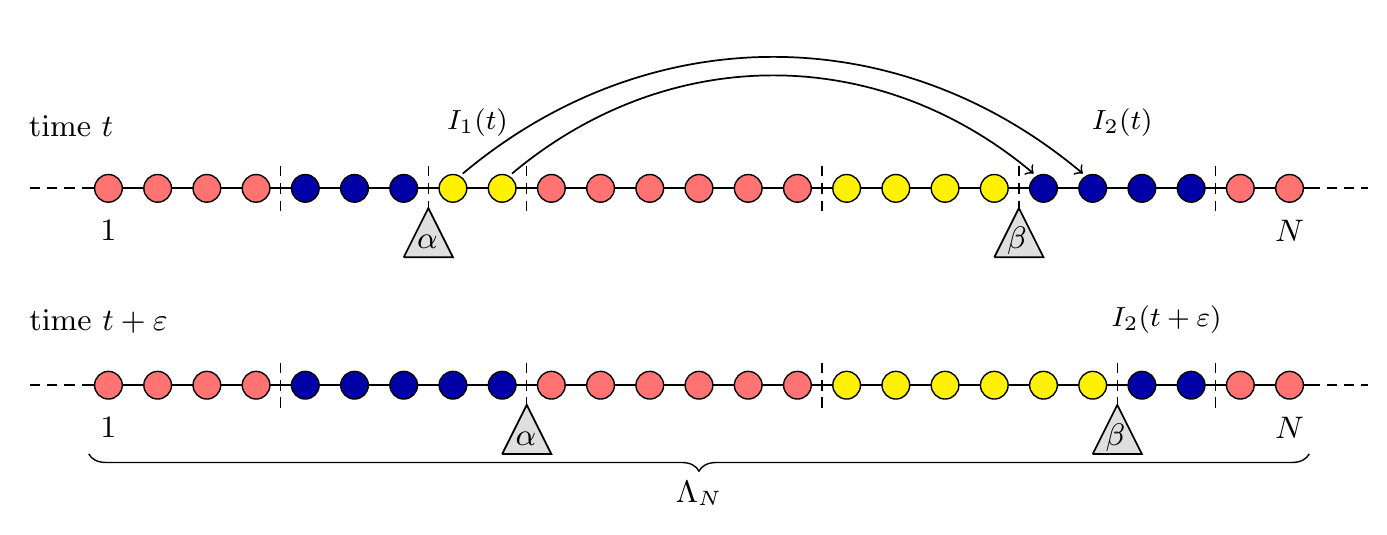}
   	\caption{Forcing separators to coalesce (by means of local modification) works with more than two kinds of agents as well.\label{multiculti}}
   \end{figure}

    With $m(u,v)$ as in the proof of Lemma \ref{multidecrease}, the partition $\Z=\bigcup_{j\in\Z}\Lambda_N+jN$ and another application of Lemmas
    \ref{ergodic} and \ref{MTP} yield
    \[\E \big[m(\Z,0)-m(0,\Z)\big]+\lim_{i,j\to\infty}\frac{1}{N(i+j)}\,\sum_{m=-i}^{j-1}\mathbbm{1}_{\{\Lambda_N+mN\text{ good at time }t\}}
	\geq\frac{q}{N}\]
	as a positive lower bound for $p(t)-p(t+\epsilon)$, uniformly in $t$. Hence, we have arrived at the same contradiction as in the proof of Proposition \ref{shortvanish} and can conclude, as in the proof of Lemma \ref{ptzero}, that $\lim_{t\to\infty}p(t)=0$. The proof of Theorem \ref{d=1,unbounded} then applies verbatim.\vspace*{-1em}
\end{proof}
\smallskip
\subsection{Bounded moving horizon}

For $h \in\mathbb{N}$, Theorem 1.2 states that the color at a given site converges almost surely in the model with two types of agents. With more than two types, there are configurations where the color of certain sites does not converge on a torus. Specifically, for $c=3$, a yellow singleton next to a red interval of length exactly $h$, embedded on both sides of blue intervals of length strictly larger than $h$, could potentially keep on jumping back and forth over the red interval forever; see Figure \ref{fig:forever}. With $c>3$, also more involved scenarios where a singleton agent jumps between a finite number of sites are possible. We do not know whether or not these semi-fixated sections will occur on $\Z$, since indeed it requires that the local configuration is not disturbed by changes far away. The answer may depend on the proportions of the types. Apart from scenarios where singletons locally jump back and forth, we conjecture that the configuration freezes also for $c\geq 3$ when $h\in\mathbb{N}$. We leave a detailed analysis of this case for future work.

 \begin{figure}[H]
   	\centering
   	\includegraphics[scale=0.86]{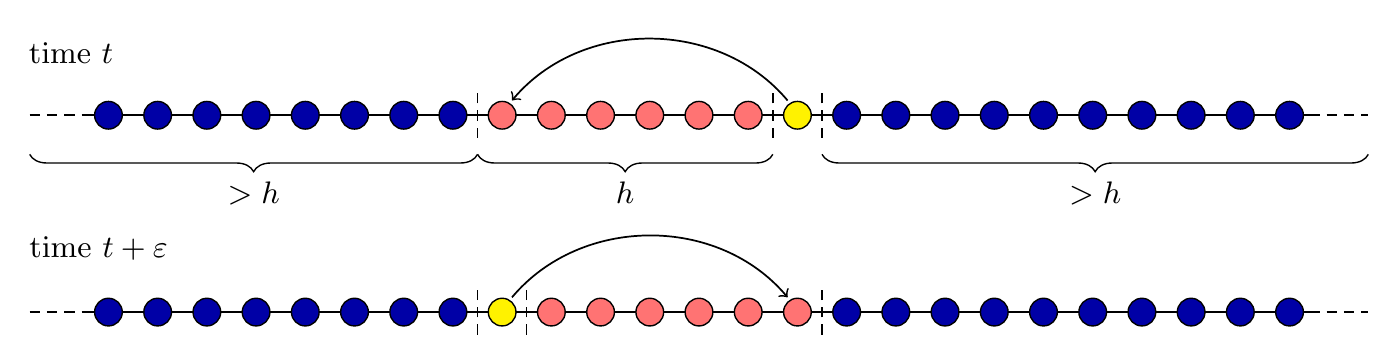}
   	\caption{A configuration with $c=3$ where certain sites may not converge in a finite system due to repeated singleton jumps. \label{fig:forever}}
   \end{figure}

\end{document}